\newtheoremstyle{theorem}{5pt}{5pt}{\itshape}{}{\bfseries}{.}{.5em}{}
\theoremstyle{theorem}
\newtheorem{theorem}{Theorem}
\newtheorem{lemma}[theorem]{Lemma}
\newtheorem{corollary}[theorem]{Corollary}
\titlespacing*{\section}{0pt}{3.5ex plus 1ex minus .2ex}{2.3ex plus .2ex}
\titlespacing*{\section}{0pt}{3.5ex plus 1ex minus .2ex}{2.3ex plus .2ex}
\begin{document}

\title{Resonances and $\Omega$-results for Exponential \\ Sums Related to Maass Forms for $\mathrm{SL}(n,\mathbb Z)$}
\author{Anne-Maria Ernvall-Hyt\"onen\footnote{Email: anne-maria.ernvall-hytonen@helsinki.fi},
Jesse J\"a\"asaari\footnote{Email: jesse.jaasaari@helsinki.fi},
Esa V\!. Vesalainen\footnote{Correspondence by email to esa.vesalainen@helsinki.fi, by phone to +358 (0) 44 562 5504, or by mail to Esa V\!. Vesalainen, Department of Mathematics and Statistics, P.O. Box 68, FI-00014 University of Helsinki, FINLAND.}\\
\normalsize Department of Mathematics and Statistics, University of Helsinki}
\date{}
\maketitle

\begin{abstract}
\noindent
We obtain resonances for short exponential sums weighted by Fourier coefficients of Maass forms for $\mathrm{SL}(n,\mathbb Z)$. This involves deriving asymptotics for the integrals appearing in the $\mathrm{GL}(n)$ Voronoi summation formula. As an application, we also prove an $\Omega$-result for short sums of Fourier coefficients.
\end{abstract}

\section{Introduction}

\subsection{Exponential sums related to cusp forms}

Little is known about Fourier coefficients of holomorphic cusp forms or Maass forms both in $\mathrm{GL}(2)$ and in $\mathrm{GL}(n)$ for $n>2$. Therefore, it makes sense to study exponential sums
\begin{equation}\label{linear}
\sum_{M\leqslant m\leqslant M+\Delta}a(m)\,e(m\alpha)
\end{equation}
of the Fourier coefficients of cusp forms with $M\in\left[1,\infty\right[$ and $\Delta\in\left[1,M\right]$. Instead of the function above, one might want to have some other function in the place of the function $\alpha m$, say, the function $\alpha m^{\beta}$. However, in what follows, we concentrate on so called linear exponential sums, that is, sums of the type \eqref{linear} with $\alpha\in \mathbb{R}$. Furthermore, we may assume that $0\leqslant\alpha<1$.

In particular, short sums are interesting for studying properties of Fourier coefficients, and they are a natural analogue for studying classical number-theoretic error terms in short intervals.

The behaviour of the exponential sums is extremely intriguing: One might expect square-root cancellation, and while this is not an unreasonable assumption in the GL(2) setting when $\Delta \ll M^{1/2}$, Ernvall-Hyt\"onen \cite{Ernvall-Hytonen--Beograd} has proved that
\begin{align}\label{reso}
&\sum_{M\leqslant m\leqslant M+\Delta}a(m)\,e\!\left(\frac{m\,\sqrt d}{\sqrt M}\right)w(m)\notag \\
&\qquad=C\,a(d)\,d^{-1/4}\int\limits_M^{M+\Delta}x^{-1/4}\,w(x)\,e\!\left(\frac{x\,\sqrt d}{\sqrt M}-2\sqrt{dx}\right)\mathrm dx+O(1),
\end{align}
for the normalised Fourier coefficients $a(n)$ of a holomorphic cusp form
\[\sum_{n=1}^\infty a(n)\,n^{(\kappa-1)/2}\,e(nz)\] of weight $\kappa\in\mathbb Z_+$ for $\mathrm{SL}(2,\mathbb Z)$, where $d\in\mathbb Z_+$ is fixed and $C$ is a non-zero constant only depending on the weight of the underlying holomorphic cusp form, and $w$ is a suitable weight function supported on $\left[M,M+\Delta\right]$. Since the integrand  does not oscillate, the integral is $\asymp\Delta\,M^{-1/4}$. This shows that for $M^{1/2+\varepsilon}\ll \Delta\leqslant M^{3/4}\,d^{-1/2}$ the size of the sum is $\asymp \Delta\,M^{-1/4}$ unless $a(d)=0$, in which case the size of the sum is $O(1)$. The estimate $\Delta\,M^{-1/4}$ is not only larger than square root size, but also grows linearly with $\Delta$, which is quite surprising. A similar phenomenon has been observed for $\mathrm{GL}(2)$ and $\mathrm{GL}(3)$ Maass forms by Ernvall-Hyt\"onen \cite{Ernvall-Hytonen, Ernvall-Hytonen-siauliai}, and for non-linear sums by Iwaniec, Luo and Sarnak \cite{Iwaniec--Luo--Sarnak} and
 by Ren and Ye \cite{Ren-Ye}.

$\Omega$-results are the other side of the story. With an $\Omega$-result we mean a lower bound in the following sense:
\[
\sum_{M\leqslant m\leqslant M+\Delta}a(m)=\Omega(F(M,\Delta))
\]
means that
\[
\sum_{M\leqslant m\leqslant M+\Delta}a(m)\neq o(F(M,\Delta)).
\]

The result above \eqref{reso} has been used to prove the following $\Omega$-result:
\[\sum_{M\leqslant m\leqslant M+cM^{1/2}}a(m)=\Omega(M^{1/4}),\]
where $c\in\mathbb R_+$ is an arbitrary fixed coefficient.
This $\Omega$-result extends the earlier work  by Ivi\'c \cite{Ivic2009} (see also \cite{Jutila1984}), where he has shown that
\[
\sum_{M\leqslant m\leqslant M+\Delta}a(m)=\Omega(\sqrt{\Delta})
\]
for $\Delta=o(\sqrt{M})$.

While upper and lower bounds are relatively widely studied in the $\mathrm{GL}(2)$ setting (see e.g. \cite{Wilton, Jutila1987b, Epstein--Hafner--Sarnak1985, Hafner1987, Hafner--Ivic}), even though there are plenty of open questions left, the situation in $\mathrm{GL}(n)$ for $n>2$ is much less understood. The best uniform upper bound for exponential sums for $\mathrm{GL}(3)$ is the one due to Miller \cite{Miller},
\[\sum_{m\leqslant M}A(m,1)\,e(m\alpha)\ll M^{3/4+\varepsilon},\]
which holds uniformly in $\alpha\in\mathbb R$.
Ren and Ye \cite{Ren-Ye--gl3} studied how the bound can be improved in the presence of a weight function in terms of rational approximations of $\alpha$. The dependence on weight and/or spectral parameters has been studied in both $\mathrm{GL}(2)$ and in $\mathrm{GL}(3)$ \cite{Li--Young2012, Godber2013}.

\subsection{What we do in this paper}

Let $\psi$ be a Maass form for $\mathrm{SL}(n,\mathbb Z)$, where $n\geqslant2$ (for a general reference, see \cite{Goldfeld}). It comes with an attached $L$-function called the Godement--Jacquet $L$-function:
\[L(s)=\sum_{m=1}^\infty\frac{A(m,1,\ldots,1)}{m^s}.\]
The Fourier coefficients $A(m,1,\ldots,1)$ satisfy a pointwise bound
\[A(m,1,\ldots,1)\ll m^{\vartheta+\varepsilon}\]
for every $m\in\mathbb Z_+$ for some $\vartheta\in\left[0,\infty\right[$ depending on $n$. It is known that we can choose $\vartheta=0$ for holomorphic cusp forms \cite{Deligne}, $\vartheta=\frac{7}{64}$ for Maass forms in $\mathrm{GL}(2)$, $\vartheta=\frac{5}{14}$ for Maass forms in $\mathrm{GL}(3)$ and $\vartheta=\frac{9}{22}$ for Maass forms in $\mathrm{GL}(4)$ \cite{Kim-Sarnak}. For $\mathrm{GL}(n)$ Maass forms with $n\geq 5$, we can choose $\vartheta=\frac{1}{2}-\frac{2}{n^2+1}$ \cite{Luo-Rudnick-Sarnak}. It has been conjectured that $\vartheta=0$ is admissible. Also, from the study of the Rankin--Selberg convolutions, it follows that
\[\sum_{m\leqslant x}\left|A(m,1,\ldots,1)\right|^2\ll x\]
for $x\in\mathbb R_+$ (see e.g.\ Sect.\ 12.1.\ in \cite{Goldfeld}).

Our object here is to study certain resonances for short exponential sums involving the coefficients $A(m,1,\ldots,1)$. This will lead to an $\Omega$-result for sums of coefficients.

We start by considering the Voronoi type summation formula for exponential sums involving Fourier coefficients of cusp forms. Namely, we derive asymptotics for the integrals in the Voronoi type summation formula for $\mathrm{GL}(n)$ Maass forms.
The details can be found in Section \ref{asymptotics-section}.

We then use these asymptotics to prove the following resonance result:
\begin{theorem}\label{resonance}
Let $M^{1-1/n+\varepsilon}\ll\Delta\ll M$, and let $d$ be a fixed positive integer. Also, let $w\in C_{\mathrm c}^\infty(\mathbb R_+)$ be supported in the interval $[M,M+\Delta]$ with $w^{(\nu)}(x)\ll_\nu\Delta^{-\nu}$ for $\nu\in\mathbb Z_+\cup\left\{0\right\}$. Then
\begin{align*}
&\sum_{M\leqslant m\leqslant M+\Delta}A(m,1,...,1)\,w(m)\,e\!\left(\frac{d^{1/n}\,m}{M^{1-1/n}}\right)
=\frac{A(1,...,1,d)}{d^{1/2-1/(2n)}\,\sqrt n }\,e\!\left(\frac{n+3}8\right)\\
&\qquad\cdot\int\limits_M^{M+\Delta}w(x)\,e\!\left(\frac{d^{1/n}\,x}{M^{1-1/n}}-n\,x^{1/n}\,d^{1/n}\right)x^{1/(2n)-1/2}\,\mathrm dx\\
&\qquad\qquad+O(\Delta\,M^{-1/2-1/(2n)}). 
\end{align*}
\end{theorem}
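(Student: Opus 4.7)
The plan is to apply the $\mathrm{GL}(n)$ Voronoi summation formula, with the twist $e(d^{1/n}m/M^{1-1/n})$ absorbed into the test function, and then to use the asymptotic analysis of the Voronoi integrals from Section~\ref{asymptotics-section} to isolate the resonant contribution at $m=d$. Set $\alpha = d^{1/n}/M^{1-1/n}$ and define $\phi(x) = w(x)\,e(\alpha x)$, so that $\phi\in C^\infty_{\mathrm c}(\mathbb R_+)$ with support in $[M,M+\Delta]$. The smooth Voronoi summation formula for $\mathrm{SL}(n,\mathbb Z)$ Maass forms then gives an identity of the shape
\[
\sum_{m\geqslant 1} A(m,1,\ldots,1)\,\phi(m)
= \sum_{\pm}\sum_{m\geqslant 1}\frac{A(1,\ldots,1,m)}{m}\,\Phi_\pm(m),
\]
where $\Phi_\pm$ are the integral transforms of $\phi$ whose asymptotics are provided by Section~\ref{asymptotics-section}. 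After substitution, each $\Phi_\pm(m)$ is expressed as an oscillatory integral with phase $\alpha x \pm n(mx)^{1/n}$, plus a lower-order remainder.

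For the $+$-sign the phase derivative is $\geqslant \alpha\gg M^{-(n-1)/n}$ uniformly on $\mathrm{supp}\,w$, so repeated integration by parts, together with the derivative bounds $w^{(\nu)}\ll\Delta^{-\nu}$, makes $\Phi_+(m)$ as small as desired in powers of $m\alpha$; summed against $A(1,\ldots,1,m)$ via Cauchy--Schwarz and the Rankin--Selberg bound $\sum_{m\leqslant X}|A(1,\ldots,1,m)|^2\ll X$, this entire contribution is $O(\Delta M^{-1/2-1/(2n)})$. For the $-$-sign the phase $\alpha x - n(mx)^{1/n}$ has a unique critical point $x_0(m) = M(m/d)^{1/(n-1)}$: $x_0(d)=M$ is the resonant case, and the leading term of the asymptotic from Section~\ref{asymptotics-section} at $m=d$ reproduces exactly the integral and the constants $e((n+3)/8)/\sqrt n$ and $d^{1/(2n)-1/2}$ appearing in the stated main term.

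The rest of the work consists of disposing of the dual terms with $m\neq d$, which split according to whether $x_0(m)$ lies in $\mathrm{supp}\,w\subset[M,M+\Delta]$. For $m$ with $x_0(m)\notin[M,M+\Delta]$---that is, $m<d$ or $m>d(1+\Delta/M)^{n-1}$---the phase derivative is bounded below on $\mathrm{supp}\,w$ by a quantity of order $\alpha\,|m-d|/d$; $N$-fold integration by parts, with the gain at each step coming from the interplay between $w^{(\nu)}\ll\Delta^{-\nu}$ and this lower bound, yields rapid decay of $\Phi_-(m)$ in $|m-d|$, and a dyadic decomposition of the resulting sum together with Cauchy--Schwarz and Rankin--Selberg absorbs the total into the claimed error. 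For the remaining (at most $\ll_d 1$) near-resonant integers $m\in(d,d(1+\Delta/M)^{n-1}]$, each integral admits a stationary-phase evaluation at its interior critical point, and the resulting stationary-phase values oscillate in $m$ with frequency $\asymp M^{1/n}/d^{(n-1)/n}$; combining this oscillation with the sharp lower-order terms from Section~\ref{asymptotics-section} shows that the total near-resonance contribution fits inside $O(\Delta M^{-1/2-1/(2n)})$.

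The main obstacle is precisely this near-resonance regime: each individual $m\in(d,d(1+\Delta/M)^{n-1}]$ produces a contribution of the same order of magnitude as the main term, so the desired cancellation must be extracted from the oscillatory factors at the stationary points together with the explicit powers of $m$ in the Voronoi normalization. Matching the precise error exponent $M^{-1/2-1/(2n)}$ further requires careful tracking of the subleading terms in the Voronoi asymptotic of Section~\ref{asymptotics-section}, and the lower bound $\Delta\gg M^{1-1/n+\varepsilon}$ is what ensures that the dyadic Voronoi decomposition and the integration-by-parts estimates combine cleanly into the stated error term.
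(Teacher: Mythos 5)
Your overall route is the same as the paper's: absorb the twist $e(d^{1/n}x/M^{1-1/n})$ into the test function, apply the $\mathrm{GL}(n)$ Voronoi formula of Theorem \ref{gln-voronoi}, insert the kernel asymptotics of Theorem \ref{voronoi-asymptotics}, kill the non-resonant dual terms by exploiting the lower bound on the phase derivative, and read off the main term from the $\ell=0$, minus-sign, $m=d$ contribution (with the $\ell\geqslant1$ terms giving $O(\Delta\,M^{-1/2-1/(2n)})$). The paper does the non-resonant estimation via Lemma \ref{jutila-motohashi-lemma} with the lower bound $|f'|\asymp m^{1/n}M^{-(1-1/n)}$, valid for every $m\neq d$ (minus sign) and every $m$ (plus sign); note in passing that your claimed lower bound $\gg\alpha\left|m-d\right|/d$ for the minus-sign phase derivative is an overstatement for large $m$ (the truth is $\asymp(m^{1/n}-d^{1/n})M^{-(1-1/n)}\asymp_d m^{1/n}M^{-(1-1/n)}$), though the correct bound still suffices for summability once $\Delta\gg M^{1-1/n+\varepsilon}$.

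The genuine gap is your treatment of the ``near-resonant'' integers $m\in\left(d,d(1+\Delta/M)^{n-1}\right]$, for which the stationary point $x_0(m)=M(m/d)^{1/(n-1)}$ lies inside $[M,M+\Delta]$. You correctly observe that each such $m$ contributes at main-term order, but your proposed rescue --- that the stationary-phase values ``oscillate in $m$'' and therefore cancel --- cannot work: this set contains at most $O_{n,d}(1)$ integers, so there is no sum over $m$ from which to extract cancellation, and a single such term with $A(1,\ldots,1,m)\neq0$ contributes, by stationary phase, an amount of size $\asymp_m M^{1/2}$, which is never $O(\Delta\,M^{-1/2-1/(2n)})$ since $\Delta\,M^{-1/2-1/(2n)}\ll M^{1/2-1/(2n)}$ in the whole admissible range. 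The actual resolution (implicit in the paper) is that this set is \emph{empty}: one needs $(1+\Delta/M)^{n-1}<1+1/d$, which holds as soon as the implied constant in $\Delta\ll M$ is small enough in terms of $n$ and $d$ (and automatically whenever $\Delta=o(M)$, which covers all the corollaries). With the near-resonant set empty, the phase derivative is bounded below on all of $[M,M+\Delta]$ for every $m\neq d$ and Lemma \ref{jutila-motohashi-lemma} applies uniformly; your argument should either impose this restriction explicitly or replace the cancellation claim, as written it asserts a mechanism that does not exist.
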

This is a generalization of Theorem 1.1 from $\cite{Ernvall-Hytonen}$, and a generalization of the corresponding results in the GL(2) setting. Notice that in Theorem 1.1 in \cite{Ernvall-Hytonen}, the constant $\frac{1}{\pi\,d^{-1/3}}$ appears accidentally in front of the main term. It should not be there, and once removed, the result agrees with this theorem.

When $\Delta\ll_{n,d}M^{1-1/(2n)}$, the integrand on the right-hand side in Theorem \ref{resonance} does not oscillate, and so we get the following corollary.
\begin{corollary}\label{weighted-corollary}
Let $M^{1-1/n+\varepsilon}\ll\Delta\ll_{n,d} M^{1-1/(2n)}$, let $d\in\mathbb Z_+$ be fixed, and let $w\in C_{\mathrm c}^\infty(\mathbb R_+)$ be supported in $\left[M,M+\Delta\right]$ and satisfy $w^{(\nu)}(x)\ll_\nu\Delta^{-\nu}$ for every $\nu\in\mathbb Z_+\cup\left\{0\right\}$. Then, if $A(1,\ldots,1,d)\neq0$, we have
\[
\sum_{M\leqslant m\leqslant M+\Delta}A(m,1,\dots ,1)\,e\!\left(\frac{d^{1/n}\,m}{M^{1-1/n}}\right)w(m)\asymp \Delta\,M^{1/(2n)-1/2}.
\]
\end{corollary}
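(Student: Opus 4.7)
The plan is to combine Theorem \ref{resonance} with a careful analysis of the oscillation of the integral in the main term. After substituting the hypotheses into Theorem \ref{resonance}, the sum equals a main term (involving an integral and the nonzero factor $A(1,\ldots,1,d)\,d^{-(1/2-1/(2n))}/\sqrt{n}$) plus an error of size $O(\Delta\,M^{-1/2-1/(2n)})$. Since the target size is $\Delta\,M^{1/(2n)-1/2}$, the error is smaller by a factor of $M^{1/n}$ and is therefore negligible, provided one can show that the integral itself has magnitude $\asymp\Delta\,M^{1/(2n)-1/2}$.

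The core task is thus to analyse the phase
\[
\phi(x)=\frac{d^{1/n}\,x}{M^{1-1/n}}-n\,d^{1/n}\,x^{1/n}.
\]
A direct computation yields $\phi'(x)=d^{1/n}\bigl(M^{1/n-1}-x^{1/n-1}\bigr)$, so that $\phi'(M)=0$. Taylor expanding around $x=M$ with the substitution $x=M(1+u)$, $u=(x-M)/M\in[0,\Delta/M]$, gives
\[
\phi(x)-\phi(M)=\frac{n-1}{2n}\,d^{1/n}\,M^{1/n-2}\,(x-M)^2+O\!\bigl(d^{1/n}\,M^{1/n-3}\,(x-M)^3\bigr).
\]
Under the hypothesis $\Delta\ll_{n,d}M^{1-1/(2n)}$, with the implicit constant chosen sufficiently small, this shows $|\phi(x)-\phi(M)|\leqslant\varepsilon_0$ for some prescribed small $\varepsilon_0>0$ uniformly on $[M,M+\Delta]$; this is precisely the scale at which the integrand of the main term stops oscillating.

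Consequently, factoring out $e(\phi(M))$, the integrand becomes $w(x)\,e(\phi(x)-\phi(M))\,x^{1/(2n)-1/2}$, whose real part stays within a constant factor of $w(x)\,x^{1/(2n)-1/2}$. Since $w$ is (as usual in this setting) a non-negative bump function supported on $[M,M+\Delta]$ with $\int w\asymp\Delta$, this yields the lower bound $\bigl|\int w(x)\,e(\phi(x))\,x^{1/(2n)-1/2}\,\mathrm dx\bigr|\gg\Delta\,M^{1/(2n)-1/2}$; the matching upper bound is immediate from the trivial estimate $|e(\phi(x))|=1$. Combined with the nonzero prefactor and the negligibility of the error, this gives the claimed $\asymp$ relation.

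The main obstacle is the phase-variation analysis and the verification that the boundary $\Delta\sim M^{1-1/(2n)}$ is precisely where the integrand ceases to oscillate; beyond this threshold a stationary phase would develop and the integral would start to decay, so the corollary really captures the full non-oscillatory regime of the main term of Theorem \ref{resonance}.
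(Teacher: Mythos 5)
Your proposal is correct and follows the same route the paper intends: the paper derives this corollary from Theorem \ref{resonance} with the one-line remark that for $\Delta\ll_{n,d}M^{1-1/(2n)}$ the integrand does not oscillate, and your phase computation ($\phi'(M)=0$ plus the Taylor bound showing the total phase variation is $O(d^{1/n}c^2)$ for $\Delta\leqslant cM^{1-1/(2n)}$) is exactly the justification of that remark, together with the routine comparison of the error term $O(\Delta M^{-1/2-1/(2n)})$ against the main term. Your observation that one must additionally assume $w\geqslant 0$ with $\int w\asymp\Delta$ is a fair reading of the (implicit) hypotheses, since the stated conclusion clearly requires some such normalization of the bump function.
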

\noindent
In Corollaries \ref{weightless-omega} and \ref{shorter-weightless-omega} below, the weight function is removed. In particular, we get
\begin{corollary}
Let $d\in\mathbb Z_+$ be fixed, and assume that $A(1,\ldots,1,d)\neq0$. Then, for $M^{1-1/n+\varepsilon}\ll\Delta\ll M^{1-1/(2n)}$, the sum
\[\sum_{M\leqslant m\leqslant M+\Delta}A(m,1,\ldots,1)\,e\!\left(\frac{d^{1/n}\,m}{M^{1-1/n}}\right)\]
is $\Omega(\Delta\,M^{1/(2n)-1/2})$, and for $M^{1-1/(2n)}\ll\Delta\ll M$, the sum is $\Omega(M^{1/2})$.
\end{corollary}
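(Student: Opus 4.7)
This corollary is explicitly framed in the surrounding text as an ``In particular'' consequence of Corollaries \ref{weightless-omega} and \ref{shorter-weightless-omega}, which remove the weight function in their respective ranges of $\Delta$. My plan is therefore to deduce this statement as the direct specialization of those two results: Corollary \ref{shorter-weightless-omega} supplies $\Omega(\Delta\,M^{1/(2n)-1/2})$ for $M^{1-1/n+\varepsilon}\ll\Delta\ll M^{1-1/(2n)}$, Corollary \ref{weightless-omega} supplies $\Omega(M^{1/2})$ for $M^{1-1/(2n)}\ll\Delta\ll M$, and their concatenation is exactly the displayed statement.

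For the content of those two underlying corollaries I would remove the smooth weight from Corollary \ref{weighted-corollary} by contradiction. Set $S(M,x)=\sum_{M\le m\le x}A(m,1,\ldots,1)\,e(d^{1/n}m/M^{1-1/n})$ and $\Delta_1=M^{1-1/n+\varepsilon}$; suppose the first-range $\Omega$-result failed, so that $|S(M,M+\Delta)|\le c\Delta M^{1/(2n)-1/2}$ uniformly for $\Delta\in[\Delta_1,M^{1-1/(2n)}]$ and all sufficiently large $M$. I would then fix a smooth weight $w\in C_c^\infty(\mathbb R_+)$ supported on $[M+\Delta_1,M+\Delta-\Delta_1]$, with transitions of width $\asymp\Delta$ so that $w^{(\nu)}\ll_\nu\Delta^{-\nu}$ and a plateau of height $1$ making $\int w\asymp\Delta$. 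On the one hand, Corollary \ref{weighted-corollary} gives $|\sum_m A(m,1,\ldots,1)\,e(\cdots)\,w(m)|\gg\Delta M^{1/(2n)-1/2}$. On the other hand, Abel summation rewrites this same quantity as $-\int_M^{M+\Delta}S(M,x)\,w'(x)\,dx$; since the support of $w'$ lies inside $[M+\Delta_1,M+\Delta-\Delta_1]$, the value $x-M$ always falls in the range $[\Delta_1,M^{1-1/(2n)}]$ where the contradictory hypothesis applies, and hence $|S(M,x)|\le c\Delta M^{1/(2n)-1/2}$ throughout the integration. Combined with $\int|w'|\,dx\le 2$ this gives an upper bound $2c\Delta M^{1/(2n)-1/2}$, contradicting the lower bound once $c$ is taken small enough.

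For the second range I would split the sum at $M+M^{1-1/(2n)}$: the short head contributes $\Omega(M^{1/2})$ by the first-range result at its upper endpoint, so it suffices to show that the tail $\sum_{M+M^{1-1/(2n)}<m\le M+\Delta}A(m,1,\ldots,1)\,e(\cdots)$ is $o(M^{1/2})$. For this I would apply the Voronoi plus stationary-phase machinery underlying Theorem \ref{resonance} to a smoothed version of the tail; since the stationary point $x=M$ of the phase lies outside the shifted support, integration by parts in the phase variable produces only a non-stationary main-term estimate of order $O(M^{1/2-1/(2n)})$. The smooth weight can then be removed at cost $O(H\,M^{\vartheta+\varepsilon})$ using the individual pointwise bound on Fourier coefficients over a transition of width $H$, and the main obstacle is to choose $H$ so that this removal error together with the non-stationary main-term bound is indeed $o(M^{1/2})$---which is feasible because $\vartheta<1/2$ is known unconditionally for every $n\ge 2$.
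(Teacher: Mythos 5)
Your top-level reduction is exactly the paper's: the displayed corollary is meant as an immediate consequence of Corollaries \ref{weightless-omega} and \ref{shorter-weightless-omega}, the first range coming from the latter and the second from the former (a $\Delta\asymp M^{1-1/(2n)}$ already lies in the range $M^{1-1/(2n)}\ll\Delta\ll M$, and $\Omega$ here is the existence statement ``not $o$''). Your de-smoothing argument for the first range is also sound and close in spirit to what the paper does: the paper removes the weight by contradiction and partial summation only at $\Delta\asymp M^{1-1/(2n)}$ (Corollary \ref{weightless-omega}) and then reaches shorter $\Delta$ by subdividing that sum into $\asymp M^{1-1/(2n)}/\Delta$ blocks and pigeonholing (Corollary \ref{shorter-weightless-omega}); you instead run the Abel-summation contradiction against Corollary \ref{weighted-corollary} directly at each $\Delta$, keeping the support of $w'$ away from the first $\Delta_1$ of the interval so the contradiction hypothesis applies. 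Both routes work.

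The genuine problem is your treatment of the second range. You are trying to prove the stronger, ``for every $\Delta$'' version by writing the sum as head plus tail and claiming the tail over $\left(M+M^{1-1/(2n)},M+\Delta\right]$ is $o(M^{1/2})$. That claim fails. First, for $\Delta\asymp M$ the phase $d^{1/n}x/M^{1-1/n}-n\,x^{1/n}m^{1/n}$ has its stationary point at $x_m=M(m/d)^{1/(n-1)}$, and for the finitely many $m>d$ with $x_m\leqslant M+\Delta$ these stationary points lie \emph{inside} the tail's support; each such $m$ contributes $\asymp\left|A(1,\ldots,1,m)\right|m^{-1/2}M^{1/2}$ by stationary phase, so the tail is genuinely of size $M^{1/2}$ unless those coefficients vanish. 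Second, even the $m=d$ term is not saved by non-stationary phase: at distance $M^{1-1/(2n)}$ from the stationary point the phase derivative is only $\asymp M^{1/(2n)-1}$, which is exactly the borderline scale (the first-derivative test gives $M^{1/(2n)-1/2}/M^{1/(2n)-1}=M^{1/2}$, with no gain), and making the transition width $H$ large enough for repeated integration by parts to win forces $H\gg M^{1-1/(2n)}$, whereupon the cost of removing the weight over the transition, whether bounded by $H\,M^{\vartheta+\varepsilon}$ or by Cauchy--Schwarz and Rankin--Selberg as $H^{1/2}M^{1/2}$, exceeds $M^{1/2}$. Fortunately none of this is needed: the second assertion of the corollary, read with the paper's definition of $\Omega$ as ``there is a choice of $\Delta$ in the stated range along which the sum is not $o(M^{1/2})$,'' is already delivered verbatim by Corollary \ref{weightless-omega}, which is your own first paragraph. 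Drop the tail analysis and the proof is complete.
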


One can also prove the following non-linear resonance result:
\begin{theorem}\label{nonlinear-resonance}
Let $M^{1-1/n+\varepsilon}\ll\Delta\ll M$ and let $d$ be a fixed positive integer. Also, let $w\in C_{\mathrm c}^\infty(\mathbb R_+)$ be supported on the interval $[M,M+\Delta]$ with $w^{(\nu)}(x)\ll_\nu\Delta^{-\nu}$ for $\nu\in\mathbb Z_+\cup\left\{0\right\}$. Then
\begin{align*}
&\sum_{M\leqslant m\leqslant M+\Delta}A(m,1,...,1)\,w(m)\,e\!\left(d^{1/n}\,m^{1/n}\right)
=\frac{A(1,...,1,d)}{d^{1/2-1/(2n)}\,\sqrt n}\,e\!\left(\frac{n+3}8\right)\\
&\qquad\cdot\int\limits_M^{M+\Delta}w(x)\,x^{1/(2n)-1/2}\,\mathrm dx
+O(\Delta\,M^{-1/2-1/(2n)}). 
\end{align*}
\end{theorem}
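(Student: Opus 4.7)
The plan is to mirror the proof of Theorem \ref{resonance}, the only structural novelty being that the combined phase of input and Voronoi kernel will now vanish identically at the resonant term, rather than merely producing a stationary point. First I apply the $\mathrm{GL}(n)$ Voronoi summation formula to the sum, treating $x\mapsto w(x)\,e(d^{1/n}\,x^{1/n})$ as the test function. This rewrites the left-hand side as a series in dual Fourier coefficients $A(1,\ldots,1,N)$, each multiplied by an integral of $w(x)\,e(d^{1/n}\,x^{1/n})$ against the Voronoi Bessel-type kernel evaluated at $Nx$.

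Next I substitute the asymptotic expansion of the Voronoi kernel developed in Section \ref{asymptotics-section}. Up to a controllable remainder, each transform takes the form of an oscillatory integral of $w(x)\,x^{1/(2n)-1/2}$ against an exponential whose phase is a linear combination of $x^{1/n}$ and $d^{1/n}\,x^{1/n}$, together with the appropriate unimodular constants and $\Gamma$-factors (including the phase $e((n+3)/8)$). The resonance is the term $N=d$ with the sign for which the input phase and the kernel phase cancel on $[M,M+\Delta]$; the integrand then degenerates to $w(x)\,x^{1/(2n)-1/2}$ times the explicit constants of the main term, reproducing the factor $A(1,\ldots,1,d)/(d^{1/2-1/(2n)}\sqrt n)$ for the same reason as in Theorem \ref{resonance}.

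For the non-resonant terms the combined phase is of the shape $c_N\,x^{1/n}$ with $c_N\neq 0$, so its derivative is nonzero and monotonic on $[M,M+\Delta]$, with magnitude $\asymp |c_N|\,M^{1/n-1}$. Repeated integration by parts against the smooth weight $w(x)\,x^{1/(2n)-1/2}$, whose derivatives are controlled by $\Delta^{-\nu}\,M^{1/(2n)-1/2}$, therefore yields bounds of order
\[
\Delta\,M^{1/(2n)-1/2}\,\bigl(|c_N|\,M^{1/n-1}\,\Delta\bigr)^{-k}
\]
for any $k\in\mathbb Z_+$. Summing over $N$ using Cauchy--Schwarz together with the Rankin--Selberg bound $\sum_{N\leqslant X}|A(1,\ldots,1,N)|^2\ll X$ gives a total contribution comfortably within $O(\Delta\,M^{-1/2-1/(2n)})$, provided the dual series is truncated at an appropriate cutoff.

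The step I expect to be most delicate is bookkeeping the errors uniformly in $N$: the expansion of Section \ref{asymptotics-section} has its weakest remainders for moderate $N$, whereas integration by parts becomes effective only when $|c_N|$ is sufficiently large. As in the proof of Theorem \ref{resonance}, I anticipate splitting the dual series at a carefully chosen cutoff depending on $\Delta$ and $M$, and verifying that both pieces are absorbed by $O(\Delta\,M^{-1/2-1/(2n)})$; the hypothesis $M^{1-1/n+\varepsilon}\ll\Delta$ is precisely what makes this balance work.
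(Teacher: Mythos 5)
Your proposal follows essentially the same route as the paper, which simply reruns the proof of Theorem \ref{resonance} with the test function $w(x)\,e(d^{1/n}x^{1/n})$: Voronoi summation, the kernel asymptotics of Theorem \ref{voronoi-asymptotics}, the resonant dual term with $\ell=0$ giving the non-oscillating main term, and Lemma \ref{jutila-motohashi-lemma} together with the Rankin--Selberg bound disposing of the remaining terms, whose phase derivatives are $\asymp N^{1/n}M^{1/n-1}\gg M^{\varepsilon}\Delta^{-1}$ under the hypothesis on $\Delta$ (no truncation of the dual series is actually needed). One caveat, shared with the paper's own one-line proof: for the input phase to cancel the kernel phase $-n\,x^{1/n}N^{1/n}$ exactly at $N=d$, the twist should be $e(n\,d^{1/n}m^{1/n})$; as literally stated the combined phase at $N=d$ is $(1-n)\,d^{1/n}x^{1/n}$, so this is a normalization issue in the statement rather than a defect of your argument.
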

\noindent
We will use Theorem \ref{resonance} to prove the following $\Omega$-result:
\begin{theorem}\label{omega} Assume that $\Delta=o(M^{1/2-\vartheta-\varepsilon})$. Then 
\[
\sum_{M\leqslant m\leqslant M+\Delta}A(m,1,\dots ,1)=\Omega(M^{1/(2n)-1/2}\,\Delta).
\]
\end{theorem}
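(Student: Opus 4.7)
I argue by contradiction, pitting the hypothesised upper bound on short unweighted sums against the lower bound supplied by the resonance identity of Theorem~\ref{resonance}. Suppose that
\[
\sum_{M\leqslant m\leqslant M+\Delta}A(m,1,\ldots,1)=o\bigl(M^{1/(2n)-1/2}\,\Delta\bigr)
\]
as $M\to\infty$. Since $\psi$ is a cusp form, fix some $d\in\mathbb Z_+$ with $A(1,\ldots,1,d)\neq 0$. Choose $\Delta_1\asymp c\,M^{1-1/(2n)}$ for a small constant $c=c(n,d)>0$, together with a weight $w\in C_{\mathrm c}^\infty(\mathbb R_+)$ supported in $[M,M+\Delta_1]$, identically $1$ on a sub-interval of length $\gg\Delta_1$, and satisfying $w^{(\nu)}\ll_\nu\Delta_1^{-\nu}$. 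Because this $\Delta_1$ lies in the non-oscillating regime, Theorem~\ref{resonance} (as in Corollary~\ref{weighted-corollary}) yields the lower bound
\[
\left|\sum_{M\leqslant m\leqslant M+\Delta_1}A(m,1,\ldots,1)\,w(m)\,e\!\left(\frac{d^{1/n}\,m}{M^{1-1/n}}\right)\right|\asymp_{n,d}\Delta_1\,M^{1/(2n)-1/2}\asymp M^{1/2}.
\]

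Now partition $[M,M+\Delta_1]$ into $K=\lfloor\Delta_1/\Delta\rfloor$ sub-intervals of length $\Delta$ with left endpoints $M_k=M+k\Delta$, and write $\phi(m)=w(m)\,e(d^{1/n}m/M^{1-1/n})=\phi(M_k)+(\phi(m)-\phi(M_k))$ on each. This decomposes the exponential sum as $\sum_k \phi(M_k)\,T_k+E$, where
\[
T_k=\sum_{M_k\leqslant m<M_{k+1}}A(m,1,\ldots,1),\qquad E=\sum_m A(m,1,\ldots,1)\bigl(\phi(m)-\phi(M_{k(m)})\bigr).
\]
Applying the hypothesised $o$-bound uniformly at the varying starting points $M_k\asymp M$, each $T_k$ is $o(M^{1/(2n)-1/2}\,\Delta)$, so the discretised piece is bounded by $K\cdot o(M^{1/(2n)-1/2}\,\Delta)=o(\Delta_1\,M^{1/(2n)-1/2})=o(M^{1/2})$.

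For the discretisation error $E$, the estimate $|\phi(m)-\phi(M_k)|\ll(m-M_k)\,d^{1/n}/M^{1-1/n}$ is combined with a mixture of the Rankin--Selberg bound $\sum_{m\leqslant X}|A(m,1,\ldots,1)|^2\ll X$ (via Cauchy--Schwarz) and the pointwise bound $A(m,1,\ldots,1)\ll m^{\vartheta+\varepsilon}$. Summing the resulting per-sub-interval contributions gives an error of size at most a constant times $d^{1/n}\,\Delta\,M^{1/(2n)}+\Delta\,M^{\vartheta+\varepsilon}$, and the hypothesis $\Delta=o(M^{1/2-\vartheta-\varepsilon})$ is precisely what drives both terms to $o(M^{1/2})$. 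The full exponential sum is therefore $o(M^{1/2})$, contradicting the lower bound above and completing the proof.

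The main technical obstacle is this last step: carefully controlling the discretisation error against the main term of size $M^{1/2}$ requires simultaneous use of the Rankin--Selberg second moment estimate and the Deligne/Kim--Sarnak-type pointwise bound with exponent $\vartheta$. The balance between these two estimates is what dictates the precise range $\Delta=o(M^{1/2-\vartheta-\varepsilon})$ appearing in the statement, and making the uniformity of the hypothesised $o$-bound in the starting point $M_k$ rigorous is the remaining bookkeeping issue.
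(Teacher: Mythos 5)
Your overall strategy -- playing the hypothesised $o$-bound off against the resonance lower bound $\asymp\Delta_1M^{1/(2n)-1/2}\asymp M^{1/2}$ over an interval of length $\Delta_1\asymp M^{1-1/(2n)}$, and decomposing that long twisted sum into blocks of length $\Delta$ -- is the same as the paper's. The gap is in your treatment of the discretisation error $E$, and it is not just bookkeeping. Your bound $E\ll d^{1/n}\Delta M^{1/(2n)}$ tacitly uses $\sum_{M\leqslant m\leqslant M+\Delta_1}\left|A(m,1,\ldots,1)\right|\ll\Delta_1$, but the Rankin--Selberg estimate is a long-range bound: Cauchy--Schwarz only gives $\sum_{M\leqslant m\leqslant M+\Delta_1}\left|A(m,1,\ldots,1)\right|\ll(\Delta_1M)^{1/2}$, so the honest estimate is $E\ll\Delta\,d^{1/n}M^{1/n-1}\cdot(\Delta_1M)^{1/2}\asymp\Delta\,M^{3/(4n)}$, and $E=o(M^{1/2})$ then requires $\Delta=o(M^{1/2-3/(4n)})$. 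Moreover, even your optimistic term $\Delta M^{1/(2n)}$ is $o(M^{1/2})$ only when $\Delta=o(M^{1/2-1/(2n)})$, and the hypothesis $\Delta=o(M^{1/2-\vartheta-\varepsilon})$ does \emph{not} imply this for $n=2$, where $\vartheta=\frac{7}{64}<\frac14=\frac1{2n}$: your claim that the hypothesis ``is precisely what drives both terms to $o(M^{1/2})$'' fails there. Concretely, for $n=2$ your argument reaches at best $\Delta=o(M^{1/8})$ (or $o(M^{1/4})$ under the unproved short-interval first-moment bound), well short of the stated range $\Delta=o(M^{25/64-\varepsilon})$. For $n\geqslant3$ one has $3/(4n)<\vartheta$, so the corrected version of your argument does cover the stated range; but as written the proof does not establish the theorem in general.

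The paper avoids a bulk discretisation error altogether by a different mechanism: it multiplies the resonating sum over $[M,M+U]$, $U\asymp M^{1-1/(2n)}$, by the geometric sum $\sum_{0\leqslant h\leqslant\Delta-1}e(d^{1/n}h/M^{1-1/n})\asymp\Delta$ and uses the exact additivity $e(\alpha m)\,e(\alpha h)=e(\alpha(m+h))$ of the \emph{linear} phase. Grouping by $m+h$ rewrites the product, exactly, as $\asymp U$ unimodular multiples of genuine length-$\Delta$ untwisted sums plus $O(\Delta)$ edge sums of length $<\Delta$; the latter are bounded pointwise by $\ll\Delta M^{\vartheta+\varepsilon}$ each, giving a total error $\ll\Delta^2M^{\vartheta+\varepsilon}$, which is $o(M^{1/2}\Delta)$ exactly when $\Delta=o(M^{1/2-\vartheta-\varepsilon})$ -- this is where the exponent in the hypothesis comes from. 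A pigeonhole over the $\asymp U$ full-length blocks then produces one of size $\gg M^{1/2}\Delta/U\asymp\Delta M^{1/(2n)-1/2}$. If you want to salvage your version, replace the freezing of $\phi$ by this exact convolution identity (or restrict to $n\geqslant3$ and use the $(\Delta_1M)^{1/2}$ bound); your remark about the uniformity of the $o$-bound in the starting point is a genuine but minor point, since the $o$-hypothesis holds for all $M'\to\infty$ and hence uniformly over $M'\in[M,M+\Delta_1]$.
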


Finally, it is interesting that one gets the following corollary concerning the determination of all the coefficients $A(m_1,\ldots,m_{n-1})$ from a subset of coefficients.
\begin{corollary}\label{uniqueness-corollary}
Let $M_1$, $M_2$, \dots be a sequence of positive real numbers tending to infinity, and let $\varepsilon$ be an arbitrarily small positive real number. Write
\[I=\mathbb Z\cap\bigcup_{\ell=1}^\infty\left[M_\ell,M_\ell+M_\ell^{1-1/n+\varepsilon}\right].\]
Then the Fourier coefficients $A(m,1,\ldots,1)$ with $m\in I$ uniquely determine all the Fourier coefficients $A(m_1,\ldots,m_{n-1})$.
\end{corollary}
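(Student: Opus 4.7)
I plan to use Theorem \ref{resonance} (in the non-oscillatory regime captured by Corollary \ref{weighted-corollary}) to extract the individual coefficient $A(1,\ldots,1,d)$ for each fixed $d\in\mathbb Z_+$ from the available data, and then to upgrade this to knowledge of every $A(m_1,\ldots,m_{n-1})$ using the multiplicative structure of Fourier coefficients of $\mathrm{GL}(n)$ Hecke--Maass forms. Without loss of generality I assume $\varepsilon<1/(2n)$, so that $\Delta_\ell:=M_\ell^{1-1/n+\varepsilon}\ll_n M_\ell^{1-1/(2n)}$. Fix $d\in\mathbb Z_+$ and pick, for each $\ell$, a smooth weight $w_\ell$ supported on $[M_\ell,M_\ell+\Delta_\ell]$ with the standard derivative bounds $w_\ell^{(\nu)}\ll_\nu\Delta_\ell^{-\nu}$ and with $L^1$-norm bounded below by a fixed proportion of $\Delta_\ell$. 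The resonance sum
\[S_\ell(d)=\sum_{M_\ell\leqslant m\leqslant M_\ell+\Delta_\ell}A(m,1,\ldots,1)\,w_\ell(m)\,e\!\left(\frac{d^{1/n}\,m}{M_\ell^{1-1/n}}\right)\]
involves only coefficients $A(m,1,\ldots,1)$ with $m\in I$, and is therefore determined by the hypothesized data. Theorem \ref{resonance} expresses $S_\ell(d)$ as an explicit nonzero constant times $A(1,\ldots,1,d)$ times a computable non-oscillatory integral of size $\asymp_{n,d}\Delta_\ell\,M_\ell^{1/(2n)-1/2}$, up to an error $O(\Delta_\ell\,M_\ell^{-1/2-1/(2n)})$. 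Dividing through recovers $A(1,\ldots,1,d)$ with an error of order $M_\ell^{-1/n}$, and letting $\ell\to\infty$ pins it down exactly.

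Knowledge of every $A(1,\ldots,1,d)$ in turn determines all Fourier coefficients of $\psi$. Indeed, these dual coefficients either equal or are complex conjugates of the standard Hecke eigenvalues of $\psi$ (depending on convention for the contragredient), and the Hecke relations for $\mathrm{GL}(n)$ Maass forms then express every $A(m_1,\ldots,m_{n-1})$ as a polynomial in those eigenvalues, equivalently in the Satake parameters of $\psi$ at primes dividing $m_1\cdots m_{n-1}$.

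The principal obstacle I anticipate is conceptual rather than computational: one must verify that for each fixed $d$ the resonance main term in Theorem \ref{resonance} strictly dominates the error term once $\ell$ is large enough. The factor $M_\ell^{1/n}$ separating the two sizes, which is precisely what the condition $\Delta_\ell\ll M_\ell^{1-1/(2n)}$ guarantees via Corollary \ref{weighted-corollary}, handles this comfortably; any implicit dependence on $d$ in the constants is harmless, since $d$ is held fixed before $\ell$ is sent to infinity.
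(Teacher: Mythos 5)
Your argument is correct, and its first half is exactly the paper's: use the resonance identity of Theorem \ref{resonance} (in the non-oscillatory range $\Delta_\ell\ll_{n,d}M_\ell^{1-1/(2n)}$ guaranteed by taking $\varepsilon<1/(2n)$) on each interval $[M_\ell,M_\ell+M_\ell^{1-1/n+\varepsilon}]$, divide by the explicitly computable main-term integral of size $\asymp\Delta_\ell M_\ell^{1/(2n)-1/2}$, and let $\ell\to\infty$ to kill the relative error $O(M_\ell^{-1/n})$ and recover $A(1,\ldots,1,d)$ for every fixed $d$. Where you diverge is the final step. The paper passes from the recovered coefficients to the full array $A(m_1,\ldots,m_{n-1})$ by invoking the strong multiplicity one theorem of Jacquet and Shalika: the values $A(1,\ldots,1,p)$ at primes already pin down the automorphic representation, hence all coefficients. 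You instead use the Hecke relations directly: since $A(1,\ldots,1,m)=\overline{A(m,1,\ldots,1)}$ for a suitably normalized Hecke eigenform, knowing all $A(d,1,\ldots,1)$ determines the Satake parameters at every prime (the $A(p^k,1,\ldots,1)$ are the one-row Schur polynomials, which generate the symmetric functions), and every $A(m_1,\ldots,m_{n-1})$ is a polynomial in these. Your route is more elementary and self-contained (pure Hecke algebra, no appeal to $L$-function rigidity), at the cost of needing the coefficients at all prime powers rather than just at primes; the paper's route is shorter to state and needs only the prime data, but imports a deeper theorem. Both, of course, implicitly require $\psi$ to be a Hecke eigenform, as the paper also tacitly assumes.
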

\noindent
Indeed, the coefficients $A(m,1,\ldots,1)$ with $m\in I$ uniquely determine an infinite sequence of exponential sums of the form appearing in Theorem \ref{resonance}, and taking $M\longrightarrow\infty$ we can recover the coefficient $A(1,\ldots,1,d)$ for every $d\in\mathbb Z$. The strong multiplicity one theorem of Jacquet and Shalika \cite{Jacquet--Shalika1981} (Theorem 12.6.1 in \cite{Goldfeld}) implies that the coefficients $A(1,\ldots,1,p)$ for primes $p$ determine all the rest of the coefficients. Corollary \ref{uniqueness-corollary} could perhaps be considered a relative of multiplicity one theorems (see e.g.\ the ones in \cite{Ramakrishnan, Brumley2006}).

\section{Notation}

The symbols $\ll$, $\gg$, $\asymp$, $O$ and $o$ are used for the usual asymptotic notation: for complex valued functions $f$ and $g$ in some set $X$, the notation $f\ll g$ means that $\left|f(x)\right|\leqslant C\left|g(x)\right|$ for all $x\in X$ for some implicit constant $C\in\mathbb R_+$. When the implied constant depends on some parameters $\alpha,\beta,\ldots$, we use $\ll_{\alpha,\beta,\ldots}$ instead of mere $\ll$. The notation $g\gg f$ means $f\ll g$, and $f\asymp g$ means $f\ll g\ll f$. When $f$ and $g$ depend on $M$, we say that $f(M)=o(g(M))$ if $g(M)$ never vanishes and $f(M)/g(M)\longrightarrow0$ as $M\longrightarrow\infty$.

All the implicit constants are allowed to depend on the underlying Maass form (hence also on $n$), on $d$, when it appears, and on $\varepsilon$, which denotes an arbitrarily small fixed positive number, which may not the same on each occurrence. The implicit constants are also allowed to depend on a weight function $w$ when such a function is used.

As usual, complex variables are written in the form $s=\sigma+it$, and we write $e(x)$ for $e^{2\pi ix}$. The subscript in the integral $\int_{(\sigma)}$ means that we integrate over the vertical line $\Re s=\sigma$. For simplicity, we write $\left\langle\cdot\right\rangle$ for $(1+\left|\cdot\right|^2)^{1/2}$.

\section{Basic properties of the Godement--Jacquet $L$-function}

The Godement--Jacquet $L$-function attached to a Maass form $\psi$ for $\mathrm{SL}(n,\mathbb Z)$ with Fourier coefficients $A(m_1,m_2,\ldots,m_{n-1})$ is the Dirichlet series
\[L(s)=\sum_{m=1}^\infty\frac{A(m,1,1,\ldots,1)}{m^s}.\]
This converges absolutely for $\sigma>1$ by the Rankin--Selberg estimate
\[\sum_{m\leqslant x}\left|A(m,1,\ldots,1)\right|^2\ll x.\]
(For this, see e.g. \cite{Goldfeld}, Remark 12.1.8.) The Godement--Jacquet $L$-function has an entire analytic continuation and satisfies the functional equation
\[\pi^{-ns/2}\,G(s)\,L(s)=\pi^{-n(1-s)/2}\,\widetilde G(1-s)\,\widetilde L(1-s),\]
where $\widetilde L$ is the Godement--Jacquet $L$-function of the dual form of $\psi$, and given by the Dirichlet series
\[\widetilde L(s)=\sum_{m=1}^\infty\frac{A(1,1,\ldots,1,m)}{m^s}\]
for $\sigma>1$, and where
\[G(s)=\prod_{\ell=1}^n\Gamma\!\left(\frac{s-\lambda_\ell}2\right)\quad\text{and}\quad
\widetilde G(s)=\prod_{\ell=1}^n\Gamma\!\left(\frac{s-\widetilde\lambda_\ell}2\right),\]
where $\lambda_\ell$ and $\widetilde\lambda_\ell$ are certain complex parameters of $\psi$ and $\widetilde\psi$ with $\sum_{\ell=1}^n\lambda_\ell=\sum_{\ell=1}^n\widetilde\lambda_\ell=0$. It is known that $\Re\lambda_\ell\leqslant\frac12$ and $\Re\widetilde\lambda_\ell\leqslant\frac12$ for each $\ell$.

An elementary application of Stirling's formula says that when $s$ lies in the vertical strips below and has a sufficiently large imaginary part (say, $\left|t\right|\geqslant1$), the multiple $\Gamma$-factors can be replaced by a single quotient of two $\Gamma$-factors:
\[\frac{\widetilde G(1-s)}{G(s)}
=n^{ns-n/2}\frac{\Gamma\!\left(\frac{1-ns}2\right)}{\Gamma\!\left(\frac{ns-(n-1)}2\right)}\left(1+O(\left|s\right|^{-1})\right),\]
as will be seen in Section \ref{asymptotics-section}.

The $L$-function has the usual growth properties in vertical strips. Since the $L$-function is bounded on the line $\sigma=1+\delta$, the functional equation and Stirling's formula (see Theorem \ref{stirling} below) imply that $L(s)\ll\left\langle t\right\rangle^{n/2+n\delta}$ on the line $\sigma=-\delta$. Thus, by the Phr\'agmen--Lindel\"of principle for vertical strips,
\[L(s)\ll\left\langle t\right\rangle^{(1+\delta-\sigma)n/2}\]
in the vertical strip $-\delta\leqslant\sigma\leqslant1+\delta$.

\section{Useful results}

A Voronoi summation formula was implemented for $\mathrm{GL}(n)$ in \cite{Miller--Schmid1, Miller--Schmid2, Goldfeld--Li, Goldfeld--Li2}. The full formula involving additive twists is complicated but we only need the twistless formula, which is the following.
\begin{theorem}\label{gln-voronoi}
Let $f\in C_{\mathrm c}^\infty(\mathbb R_+)$. Then
\begin{align*}
&\sum_{m=1}^\infty A(m,1,\ldots,1)\,f(m)\\
&\qquad=\sum_{m=1}^\infty\frac{A(1,\ldots,1,m)}m\,\frac1{2\pi i}\int\limits_{(-\sigma_0)}
\widetilde f(s)\,\pi^{-n/2}\,\frac{\widetilde G(1-s)}{G(s)}\left(\pi^nm\right)^s\mathrm ds,
\end{align*}
where
\[\widetilde f(s)=\int\limits_0^\infty f(x)\,x^{s-1}\,\mathrm dx\]
is the Mellin transform of $f$, and $\sigma_0$ is a large positive real number, depending on the form.
\end{theorem}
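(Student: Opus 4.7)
The plan is to carry out the standard ``inverse Mellin transform $+$ functional equation'' derivation of a Voronoi-type formula. First, since $f\in C_{\mathrm c}^\infty(\mathbb R_+)$, the Mellin transform $\widetilde f$ extends to an entire function which decays faster than any polynomial on every vertical line. Choosing $\sigma_1>1+\vartheta$ so that $L(s)$ converges absolutely on $\Re s=\sigma_1$, I would use Mellin inversion $f(m)=\frac{1}{2\pi i}\int_{(\sigma_1)}\widetilde f(s)\,m^{-s}\,ds$ and interchange sum and integral (justified by absolute convergence of $L$ and rapid decay of $\widetilde f$) to obtain
\[\sum_{m=1}^\infty A(m,1,\ldots,1)\,f(m)=\frac{1}{2\pi i}\int_{(\sigma_1)}\widetilde f(s)\,L(s)\,ds.\]

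Next I would shift the contour leftwards to $\Re s=-\sigma_0$, with $\sigma_0$ chosen so large that $\widetilde L(1-s)$ converges absolutely on the new line. Because $\psi$ is a cusp form, $L$ is entire and no residues are picked up; the horizontal pieces at heights $\pm T$ vanish as $T\to\infty$, since $L(s)$ is of polynomial growth in $|t|$ on vertical strips by the convexity bound already recorded in the excerpt, while $\widetilde f$ has super-polynomial decay. Applying the functional equation on the new contour rearranges to
\[L(s)=\pi^{-n/2}\,\pi^{ns}\,\frac{\widetilde G(1-s)}{G(s)}\,\widetilde L(1-s).\]

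Finally, since $\Re(1-s)=1+\sigma_0$ is large, the Dirichlet series $\widetilde L(1-s)=\sum_m A(1,\ldots,1,m)\,m^{s-1}$ converges absolutely and uniformly in $t$. Substituting, interchanging sum and integral once more, and grouping the $m$-dependence as $m^{s-1}\pi^{ns}=m^{-1}(\pi^nm)^s$ yields
\[\sum_{m=1}^\infty A(m,1,\ldots,1)\,f(m)=\sum_{m=1}^\infty\frac{A(1,\ldots,1,m)}{m}\cdot\frac{1}{2\pi i}\int_{(-\sigma_0)}\widetilde f(s)\,\pi^{-n/2}\,\frac{\widetilde G(1-s)}{G(s)}\,(\pi^nm)^s\,ds,\]
which is exactly the claimed formula.

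The main obstacle is justifying the contour shift and the two interchanges of sum and integral. All three reduce to controlling $|\widetilde G(1-s)/G(s)|$ on vertical lines in the left half-plane: Stirling's formula shows this ratio is polynomially bounded in $|t|$, of order $|t|^{n(\sigma_0+1/2)}$, while $\widetilde f(\sigma+it)\ll_{A,\sigma}\langle t\rangle^{-A}$ for every $A$. Combined with the Rankin--Selberg $L^2$-average for the dual coefficients $A(1,\ldots,1,m)$, this handles all the analytic bookkeeping and makes every step above rigorous.
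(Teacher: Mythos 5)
Your argument is correct, but note that the paper does not prove this statement at all: Theorem \ref{gln-voronoi} is imported from the literature (Miller--Schmid, Goldfeld--Li), where it appears as the special ``twistless'' case of a much harder general Voronoi formula with additive twists $e(am/c)$, whose proof requires automorphic distributions or hyper-Kloosterman-sum machinery. What your proposal shows -- correctly -- is that the untwisted case needs none of that: it is exactly the classical chain of Mellin inversion, interchange of sum and integral on $\Re s=\sigma_1>1$, contour shift to $\Re s=-\sigma_0$ (no residues, since the Godement--Jacquet $L$-function of a cusp form is entire), the functional equation rewritten as $L(s)=\pi^{ns-n/2}\,\widetilde G(1-s)\,G(s)^{-1}\,\widetilde L(1-s)$, and expansion of $\widetilde L(1-s)$ on the far-left line where it converges absolutely; the bookkeeping $m^{s-1}\pi^{ns}=m^{-1}(\pi^n m)^s$ then reproduces the stated integrand exactly. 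Two small points to tighten: (i) the polynomial growth of $L$ that you invoke for the horizontal segments is recorded in the paper only for the narrow strip $-\delta\leqslant\sigma\leqslant1+\delta$, so you should note that the same functional-equation-plus-Stirling argument combined with Phr\'agmen--Lindel\"of gives $L(s)\ll\left\langle t\right\rangle^{C}$ throughout the wider strip $-\sigma_0\leqslant\sigma\leqslant\sigma_1$; (ii) one should observe that $\widetilde G(1-s)$ has its poles only at $s=1-\widetilde\lambda_\ell+2k$ with $\Re s\geqslant\tfrac12$, so the quotient $\widetilde G(1-s)/G(s)$ is holomorphic on the final line $\Re s=-\sigma_0$ and the term-by-term interchange there is legitimate. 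With those remarks your derivation is a complete and self-contained proof of the twistless formula, more elementary than the general results the paper cites.
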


We need to be able to simplify and understand the behaviour of the $\Gamma$-factors appearing in the above integrals, and for this purpose Stirling's formula is needed.
\begin{theorem}\label{stirling}
Fix some $K\in\mathbb Z_+$, $\delta\in\left]0,\pi\right[$ and $R\in\mathbb R_+$. Then
\[\Gamma(s)=\sqrt{2\pi}\,\exp\!\left(\Bigl(s-\frac12\Bigr)\log s-s\right)\left(1+\sum_{k=1}^K\frac{a_k}{s^k}+O_{K,\delta,R}\!\left(\left|s\right|^{-K-1}\right)\right)\]
for all $s\in\mathbb C$ with $\left|s\right|\geqslant R$ and $\left|\arg s\right|\leqslant\pi-\delta$. The constant coefficients $a_k$ do not depend on $K$, $\delta$ or $R$.

For vertical strips, $\left|\Gamma(s)\right|$ has the following useful estimate: given fixed real numbers $A<B$, we have for $s$ with $A\leqslant\sigma\leqslant B$ and $t\geqslant1$ that
\[\left|\Gamma(s)\right|=\sqrt{2\pi}\,t^{\sigma-1/2}\,e^{-\pi t/2}\left(1+O(t^{-1})\right).\]
\end{theorem}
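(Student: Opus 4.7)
The plan is first to establish the general asymptotic expansion on the sector $\left|\arg s\right|\le\pi-\delta$, and then to deduce the vertical-strip estimate as a direct corollary of it.

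For the main expansion, I would begin with Binet's second formula
\[\log\Gamma(s)=\left(s-\frac12\right)\log s-s+\frac12\log(2\pi)+2\int_0^\infty\frac{\arctan(u/s)}{e^{2\pi u}-1}\,\mathrm du,\]
valid for $\Re s>0$. Taylor-expanding the arctangent in $u/s$ and applying the classical evaluation
\[\int_0^\infty\frac{u^{2k+1}}{e^{2\pi u}-1}\,\mathrm du=\frac{\left|B_{2k+2}\right|}{4(k+1)},\]
expressed in terms of Bernoulli numbers, produces the formal asymptotic series $\sum_{k\ge 1}a_k\,s^{-k}$. Truncating at order $K$ and estimating the tail integral yields the error $O_K(\left|s\right|^{-K-1})$, uniformly for $\left|\arg s\right|\le\pi/2-\delta'$. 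The remaining sectors $\pi/2<\left|\arg s\right|\le\pi-\delta$ would be handled via the reflection formula $\Gamma(s)\,\Gamma(1-s)=\pi/\sin(\pi s)$, combined with the asymptotic expansion of $\log\sin(\pi s)$ for large $\left|\Im s\right|$, transferring the expansion from the right half-plane to the left.

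For the vertical-strip estimate, take $s=\sigma+it$ with $A\le\sigma\le B$ and $t\ge 1$, so that $\left|s\right|\ge 1$ and $\left|\arg s\right|<\pi/2$, placing $s$ within the domain of the first part. Writing $\left|s\right|=t\,(1+\sigma^2/t^2)^{1/2}$ and $\arg s=\pi/2-\arctan(\sigma/t)$, Taylor expansion gives $\log\left|s\right|=\log t+O(t^{-2})$ and $\arg s=\pi/2-\sigma/t+O(t^{-3})$. Substituting into the real part of $(s-\tfrac12)\log s-s$, the $\sigma$-term produced by $-t\arg s$ cancels against the $-\sigma$ from $-s$, yielding
\[\Re\!\left(\left(s-\tfrac12\right)\log s-s\right)=\left(\sigma-\tfrac12\right)\log t-\frac{\pi t}{2}+O(t^{-1}).\]
Exponentiating this and multiplying by the prefactor $\sqrt{2\pi}\bigl(1+O(\left|s\right|^{-1})\bigr)$ supplied by the first part produces the claimed formula for $\left|\Gamma(s)\right|$.

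The principal technical point is achieving the stated uniformity of the remainder across the full sector $\left|\arg s\right|\le\pi-\delta$; bounding the Binet tail is routine on the right half-plane, but extending to arguments close to the negative real axis through the reflection formula requires care in avoiding the poles of $\Gamma$ and in controlling $\log\sin(\pi s)$ in strips of bounded $\Im s$. Once that uniformity is established, the vertical-strip bound reduces to the straightforward Taylor expansion sketched above.
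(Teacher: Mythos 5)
The paper does not prove this statement at all: Theorem \ref{stirling} is quoted as the classical Stirling expansion and used as a black box, so there is no in-paper argument to compare yours against. Your proposal is a correct, standard derivation, and it would serve as a proof; the points you single out as needing care are the right ones. Three remarks. First, Binet's second formula gives an asymptotic series for $\log\Gamma(s)-\bigl(s-\tfrac12\bigr)\log s+s-\tfrac12\log(2\pi)$, supported on \emph{odd} powers of $1/s$ with coefficients $B_{2k}/(2k(2k-1))$; to reach the stated multiplicative form you must still exponentiate the truncated series together with its $O_K(|s|^{-K-1})$ remainder. This is routine because the whole correction is $O(1/|s|)$, but it is the step that actually produces the coefficients $a_k$ (which live at all powers of $1/s$, not just odd ones), so it should be stated rather than folded into ``produces the formal asymptotic series.'' Second, the reflection step is less delicate than you fear: for $\pi/2<\left|\arg s\right|\leqslant\pi-\delta$ and $\left|s\right|\geqslant R$ one automatically has $\left|\Im s\right|\geqslant\left|s\right|\sin\delta$, so $s$ never lies in a strip of bounded imaginary part near the poles of $\Gamma$, $\sin(\pi s)$ equals a single exponential up to a factor $1+O(e^{-2\pi\left|\Im s\right|})$, and the only genuine work is the branch bookkeeping relating $\log(1-s)$ to $\log s\mp i\pi$. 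Third, in the vertical-strip deduction your assertion that $\left|\arg s\right|<\pi/2$ fails when $\sigma<0$, which is permitted since $A$ may be negative; what you need, and what holds, is $\left|\arg s\right|\leqslant\pi-\delta$ for some $\delta$ depending only on $A$, so the sector expansion still applies. With that correction, your computation
\[\Re\!\left(\Bigl(s-\frac12\Bigr)\log s-s\right)=\Bigl(\sigma-\frac12\Bigr)\log t-\frac{\pi t}2+O(t^{-1}),\]
including the cancellation of the $\sigma$ terms between $-t\arg s$ and $-\Re s$, is exactly right, and the factor $1+O(t^{-1})$ in the final estimate comes from the $a_1/s$ term of the expansion.
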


We will need a lemma for estimating exponential integrals. The following is Lemma 6 in \cite{Jutila--Motohashi}.
\begin{lemma}\label{jutila-motohashi-lemma}
Let $a,b\in\mathbb R_+$ and $a<b$, and let $g\in C_{\mathrm c}^\infty(\mathbb R_+)$ with $\mathrm{supp}\,g\subseteq\left[a,b\right]$, and let $G_0$ and $G_1$ be such that
\[g^{(\nu)}(x)\ll_\nu G_0\,G_1^{-\nu}\]
for all $x\in\mathbb R_+$ for each nonnegative integer $\nu$. Also, let $f$ be a holomorphic function defined in $D\subseteq\mathbb C$, which consists of all points in the complex plane with distance smaller than $\rho\in\mathbb R_+$ from the interval $\left[a,b\right]$ of the real axis. Assume that $f$ is real-valued on $\left[a,b\right]$ and let $F_1\in\mathbb R_+$ be such that
\[F_1\ll\left|f'(z)\right|\]
for all $z\in D$. Then, for all positive integers $P$,
\[\int\limits_a^bg(x)\,e(f(x))\,\mathrm dx\ll_PG_0\left(G_1\,F_1\right)^{-P}\left(1+\frac{G_1}\rho\right)^P\left(b-a\right).\]
\end{lemma}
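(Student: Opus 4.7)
The plan is to prove the bound by iterated integration by parts. Since $g$ is smooth with support contained in $[a,b]$, it vanishes to all orders at the endpoints, so boundary terms never appear. The identity $e(f(x)) = (2\pi i\,f'(x))^{-1}\frac{\mathrm d}{\mathrm dx}e(f(x))$ allows one integration by parts to rewrite
\[\int_a^b g(x)\,e(f(x))\,\mathrm dx=\int_a^b g_1(x)\,e(f(x))\,\mathrm dx,\qquad g_1(x)=-\frac{1}{2\pi i}\left(\frac{g(x)}{f'(x)}\right)',\]
and iterating this procedure $P$ times produces an integrand $g_P$ defined by the $P$-fold application of the operator $\phi\mapsto-(2\pi i)^{-1}(\phi/f')'$ to $g$. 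The problem thereby reduces to an $L^\infty$ bound on $g_P$, after which integrating over $[a,b]$ contributes the final factor $b-a$.

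The holomorphy assumption on $f$ enters by supplying control on derivatives of $1/f'$. Since $|1/f'(z)|\ll 1/F_1$ on $D$, Cauchy's integral formula applied at any $x\in[a,b]$ over a circle of radius slightly smaller than $\rho$ gives
\[\left|(1/f')^{(k)}(x)\right|\ll_k \frac{k!}{F_1\,\rho^k}\qquad\text{for every }k\in\mathbb Z_+\cup\{0\}.\]
Combined with the hypothesis $|g^{(\nu)}(x)|\ll_\nu G_0\,G_1^{-\nu}$, the Leibniz rule then controls every derivative of every product of shape $g^{(*)}/f'$ that the iteration produces.

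I would then prove, by induction on $P$, that
\[|g_P^{(k)}(x)|\ll_{P,k} G_0\,F_1^{-P}\,\min(G_1,\rho)^{-(P+k)}\qquad(x\in[a,b]).\]
The base case $P=0$ is exactly the hypothesis on $g$. For the inductive step, write $g_{P+1}^{(k)}=-(2\pi i)^{-1}(g_P/f')^{(k+1)}$, expand via Leibniz, and apply the inductive bound on $g_P^{(j)}$ together with the Cauchy bound on $(1/f')^{(k+1-j)}$. Using the identities $\binom{k+1}{j}(k+1-j)!=(k+1)!/j!$ and the elementary estimate $\min(G_1,\rho)^{-j}\rho^{-(k+1-j)}\leqslant\min(G_1,\rho)^{-(k+1)}$, together with $\sum_{j\geqslant 0}1/j!\leqslant e$, collapses the resulting binomial sum and yields the next instance of the claim. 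Specializing to $k=0$ and rewriting $\min(G_1,\rho)^{-1}\leqslant G_1^{-1}(1+G_1/\rho)$ converts the pointwise bound into
\[|g_P(x)|\ll_P G_0\,(G_1F_1)^{-P}(1+G_1/\rho)^P,\]
and integrating over $[a,b]$ concludes the argument.

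The only delicate point is the combinatorial bookkeeping in the induction step; conceptually, the estimate simply records that the effective length scale on which the iterated operator acts is $\min(G_1,\rho)$, i.e.\ the smaller of the intrinsic scale of $g$ and the holomorphy radius of $f$. The $(1+G_1/\rho)^P$ factor in the final bound is exactly the price for converting this natural minimum back to the scale $G_1$ of $g$ itself. No genuinely hard step is anticipated: the whole argument is standard non-stationary phase with the phase's regularity extracted via Cauchy's integral formula rather than via explicit derivative bounds.
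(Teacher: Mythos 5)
Your proposal is correct and follows essentially the same route as the source the paper cites for this lemma (Lemma~6 of Jutila--Motohashi), namely iterated non-stationary-phase integration by parts with the derivatives of $1/f'$ controlled via Cauchy's integral formula on the $\rho$-neighbourhood $D$; the paper itself gives no proof, only the citation. The inductive bookkeeping with the scale $\min(G_1,\rho)$ is carried out correctly, and the boundary terms indeed vanish since $\mathrm{supp}\,g\subseteq[a,b]$ forces all derivatives of $g$ to vanish at $a$ and $b$.
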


\section{Voronoi summation for Maass forms for $\mathrm{SL}(n,\mathbb Z)$}\label{asymptotics-section}

The goal is to derive asymptotics for the integrals appearing in Theorem \ref{gln-voronoi}. The relevant integral is
\[\Omega(y)=\frac1{2\pi i}\int\limits_{(-\sigma_0)}
\int\limits_0^\infty f(x)\,x^{s-1}\,\mathrm dx
\,\pi^{-n/2}\,\frac{\widetilde G(1-s)}{G(s)}\,y^s\,\mathrm ds,\]
where $f\in C_{\mathrm c}^\infty(\mathbb R_+)$.
The $x$-integral is certainly $\ll_N t^{-N}$, and so the $s$-integral behaves well.

\begin{theorem}\label{voronoi-asymptotics}
For any $K\in\mathbb Z_+$, $x\gg1$, $y\gg1$, we have
\[\Omega\!\left(y\right)
=y^{1/2+1/(2n)}
\int\limits_0^\infty f(x)\,x^{1/(2n)-1/2}\,\mathcal K(x,y)\,\mathrm dx,\]
where $\mathcal K$ has the asymptotics
\begin{multline*}
\mathcal K
=\sum_{\ell=0}^K
(xy)^{-\ell/n}\bigl(c_\ell^+\,e(\pi^{-1}\,n\,x^{1/n}\,y^{1/n})
+c_\ell^-\,e(-\pi^{-1}\,n\,x^{1/n}\,y^{1/n})\bigr)\\
+O\bigl((xy)^{-(K+1)/n}\bigr).
\end{multline*}
We emphasize that the implicit constant in the $O$-term is independent of $f$.
Here the leading coefficients $c_0^\pm$ are given by
\[c_0^\pm=\pi^{-(n+1)/2}\,\frac1{\sqrt n}\,e\!\left(\mp\frac{n+3}8\right).\]
\end{theorem}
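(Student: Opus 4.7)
My plan is to evaluate $\Omega(y)$ by a Mellin--Barnes analysis whose output is controlled by the large-argument asymptotics of the $K$-Bessel function $K_{n/2}$ on the imaginary axis.

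The first step is to replace the full Gamma quotient $\widetilde G(1-s)/G(s)$ on the line $\Re s = -\sigma_0$ by the simpler quotient mentioned already in Section 3, using Stirling's formula (Theorem \ref{stirling}) applied to each of the $n$ Gammas in $G(s)$ and $\widetilde G(1-s)$. Using $\sum_\ell\lambda_\ell = 0 = \sum_\ell\widetilde\lambda_\ell$ together with the Gauss multiplication formula, I expect to obtain the full asymptotic
\[\frac{\widetilde G(1-s)}{G(s)} = n^{ns-n/2}\,\frac{\Gamma((1-ns)/2)}{\Gamma((ns-(n-1))/2)}\Bigl(1 + \sum_{k=1}^K\frac{b_k}{s^k} + O(|s|^{-K-1})\Bigr).\]
Since $\widetilde f(s)$ decays faster than any polynomial in vertical strips (by repeated integration by parts in the Mellin transform), the $1/s^k$ corrections each produce an oscillatory term of the same shape as the leading one but smaller by a factor of $u^{-k/n}$, accounting for the subleading $c_\ell^\pm$.

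Next, I would swap $\widetilde f(s) = \int_0^\infty f(x)x^{s-1}\,dx$ with the contour integral (justified by the rapid vertical decay) to obtain
\[\Omega(y) = \int_0^\infty f(x)\,\Phi(xy)\,\frac{dx}{x},\qquad \Phi(u) = \frac{1}{2\pi i}\int_{(-\sigma_0)}\pi^{-n/2}\,\frac{\widetilde G(1-s)}{G(s)}\,u^s\,ds.\]
The fact that $\Phi$ depends only on the product $u = xy$ will guarantee uniformity of the error in $f$, and the target factor $u^{(n+1)/(2n)}$ will emerge as $y^{1/2+1/(2n)}x^{1/(2n)-1/2}$ under the integral. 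To handle $\Phi(u)$ itself, I would apply the reflection formula
\[\frac{1}{\Gamma((ns-n+1)/2)} = \frac{\sin(\pi(ns-n+1)/2)}{\pi}\,\Gamma((n+1-ns)/2)\]
together with $\sin\theta = \tfrac{1}{2i}(e^{i\theta}-e^{-i\theta})$, splitting $\Phi$ into two integrals $\Phi^\pm$ of the form
\[\frac{C^\pm}{2\pi i}\int_{(-\sigma_0)}\Gamma((1-ns)/2)\,\Gamma((n+1-ns)/2)\bigl(un^n e^{\pm i\pi n/2}\bigr)^s\,ds.\]
The substitution $w = -ns/2$ brings each integral into the standard Mellin--Barnes representation of $K_{n/2}$, equal to $2\,W_\pm^{(n+2)/4}\,K_{n/2}(2\sqrt{W_\pm})$ with $W_\pm = n^2 u^{2/n}e^{\pm i\pi}$. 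Since $\sqrt{W_\pm} = \pm in u^{1/n}$, invoking the classical expansion
\[K_\nu(z) \sim \sqrt{\pi/(2z)}\,e^{-z}\sum_{k=0}^{K}\gamma_{\nu,k}\,z^{-k}\]
on the imaginary axis will then yield the oscillatory factors $e(\mp\pi^{-1}n u^{1/n})$, the power $u^{(n+1)/(2n)}$, and the full descending series in $u^{-1/n}$ giving all the $c_\ell^\pm$.

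The hard part will be the careful bookkeeping of branches and phase factors. The quantities $W_\pm$ sit exactly on the principal branch cut of $\log$ (at $\arg W_\pm = \pm\pi$), so the determinations of $W_\pm^{(n+2)/4}$ and of the analytic continuation of $K_{n/2}(z)$ into $|\arg z| = \pi/2$ must be chosen consistently. The combined phase contributions from $n^{-n/2}$, $\pi^{-n/2}$, the sine-expansion constants $e^{\mp i\pi(n-1)/2}$, the branch of $W_\pm^{(n+2)/4}$, and the $\sqrt{1/z}$ factor in the Bessel asymptotic must recover precisely the leading constant $c_0^\pm = \pi^{-(n+1)/2}\,n^{-1/2}\,e(\mp(n+3)/8)$; once this is done, the uniform remainder $O((xy)^{-(K+1)/n})$ follows from the uniform remainders in the $K_{n/2}$ expansion and in the Stirling expansion of the Gamma quotient.
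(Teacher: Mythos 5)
Your overall strategy---reduce $\widetilde G(1-s)/G(s)$ to the single quotient $n^{ns-n/2}\,\Gamma((1-ns)/2)/\Gamma((ns-(n-1))/2)$ by Stirling, recognize a Bessel kernel depending only on $u=xy$, and read off the oscillations from classical Bessel asymptotics---is the right one, and your phase bookkeeping checks out: the substitution $w=-ns/2$ does turn each half of the sine into $2\,W_\pm^{(n+2)/4}K_{n/2}(2\sqrt{W_\pm})$ with $2\sqrt{W_\pm}=\pm 2inu^{1/n}$, which reproduces $e(\mp\pi^{-1}nx^{1/n}y^{1/n})$ and the power $u^{1/2+1/(2n)}$. (The paper reaches the same kernel as $J_{\nu-n/2}(2nx^{1/n}y^{1/n})$ by shifting the contour to the right and summing residues at $s=(2j+1)/n$ into the $J$-Bessel power series; on the imaginary axis $K_{n/2}$ and $J_{-n/2}$ encode the same asymptotics, so the two routes differ mainly in packaging.)

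As written, however, the plan has gaps that would make the argument fail. First, you cannot pull $f$ out and define $\Phi(u)=\frac1{2\pi i}\int_{(-\sigma_0)}\pi^{-n/2}\bigl(\widetilde G(1-s)/G(s)\bigr)u^s\,\mathrm ds$: on the line $\Re s=-\sigma_0$ the Gamma quotient grows like $\left\langle t\right\rangle^{n(1/2+\sigma_0)}$, so this integral diverges; the original $\Omega(y)$ converges only because $\widetilde f(s)$ decays rapidly, and that factor must stay inside until the contour is in a region of decay. Second, after writing $\sin\theta=\frac1{2i}(e^{i\theta}-e^{-i\theta})$, the two pieces $\Phi^\pm$ are individually divergent on every admissible vertical line: the product $\Gamma((1-ns)/2)\,\Gamma((n+1-ns)/2)$ decays like $e^{-\pi n\left|t\right|/2}\left\langle t\right\rangle^{n/2-n\sigma}$, the exponential is exactly cancelled by $\left|e^{\pm i\pi ns/2}\right|$ in one $t$-direction, and the remaining polynomial is integrable only for $\sigma>1/2+1/n$, a region blocked by the poles at $s=1/n,3/n,\dots$. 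Both problems are fixable (rotate the contours, or argue as the paper does by computing residues with $f$ kept inside and bounding the remainder on one well-chosen line), but they must be addressed. Third, the assertion that the Stirling corrections $b_k/s^k$ ``produce terms of the same shape smaller by $u^{-k/n}$'' has no mechanism behind it: the paper needs a separate inductive lemma for exactly this, expanding $1/(s+\Lambda)$ by partial fractions against the denominator Gamma and using $s\,\Gamma(s)=\Gamma(s+1)$ to trade each power of $1/s$ for a shift of the Bessel order; you would need an analogous device in your $K$-Bessel normalization to produce the $c_\ell^\pm$ for $\ell\geqslant1$ and the uniform remainder.
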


\begin{proof}
The proof is based on the derivation of the asymptotics for integrals appearing in the ternary divisor function Voronoi summation formula in \cite{Ivic}, also performed for $\mathrm{GL}(3)$ Maass forms in \cite{Li}.

The first step in the proof is to replace the unwieldy $\Gamma$-factors by a simpler expression. In particular, we aim to replace them with a quotient of two $\Gamma$-factors.

We shall pick $\Lambda\in\mathbb R_+$ which is larger than the abcissa $\sigma_0$ in the original integral, larger in absolute values than any of the $\lambda_\ell$ and $\widetilde\lambda_\ell$, and larger than $K/n+1/(2n)-1/2$. 
Restricting to $s$ with $\left|s\right|\geqslant\Lambda$ and applying Stirling's formula, we get, after some simplification,
\begin{align*}
&\prod_{\ell=1}^n\frac{\Gamma\!\left(\frac{1-s-\widetilde\lambda_\ell(\nu)}2\right)}{\Gamma\!\left(\frac{s-\lambda_\ell(\nu)}2\right)}\\
&\qquad=\frac{\exp\!\left(n\frac{1-s-1}2\log\frac{1-s}2-n\frac{1-s}2+n\sum_{k=1}^K\frac{a_k'2^k}{(1-s)^k}\right)}{\exp\!\left(n\frac{s-1}2\log\frac s2-n\frac s2+n\sum_{k=1}^K\frac{a_k''2^k}{s^k}\right)}\left(1+O(\left|s\right|^{-K-1})\right).
\end{align*}
Here and below the implicit constants and the constant coefficients $a_k'$ and $a_k''$ will depend on $\lambda_\ell$ and $\widetilde\lambda_\ell$.

It is not hard to guess from this that if we want to replace these gamma factors by a quotient of two gamma functions, the new quotient should be
\begin{align*}
&\frac{\Gamma\!\left(\frac{1-ns}2\right)}{\Gamma\!\left(\frac{ns-(n-1)}2\right)}
\end{align*}

Some careful calculation reveals that the quotient of the old gamma factors and the new gamma factors is
\begin{align*}
&\prod_{\ell=1}^n\frac{\Gamma\!\left(\frac{1-s-\widetilde\lambda_\ell(\nu)}2\right)}{\Gamma\!\left(\frac{s-\lambda_\ell(\nu)}2\right)}
\cdot\frac{\Gamma\!\left(\frac{ns-(n-1)}2\right)}{\Gamma\!\left(\frac{1-ns}2\right)}
=
n^{ns-n/2}\left(1+\sum_{k=1}^K\frac{c_k}{s^k}+O\!\left(\left|s\right|^{-K-1}\right)\right).
\end{align*}
We write
\begin{align*}
H(s)&=n^{-ns+n/2}\prod_{\ell=1}^n\frac{\Gamma\!\left(\frac{1-s-\widetilde\lambda_\ell(\nu)}2\right)}{\Gamma\!\left(\frac{s-\lambda_\ell(\nu)}2\right)}\cdot\frac{\Gamma\!\left(\frac{ns-(n-1)}2\right)}{\Gamma\!\left(\frac{1-ns}2\right)}-1\\
&=\sum_{k=1}^K\frac{c_k}{s^k}+O\!\left(\left|s\right|^{-K-1}\right),
\end{align*}
and split the integral $\Omega(y)$ into two parts using $H(s)$:
\begin{align*}
\Omega(y)&=\frac1{2\pi i}\int\limits_{(-\sigma)}
\int\limits_0^\infty f(x)\,x^{s-1}\,\mathrm dx
\,\pi^{-n/2}\,n^{ns-n/2}\,\frac{\Gamma\!\left(\frac{1-ns}2\right)}{\Gamma\!\left(\frac{ns-(n-1)}2\right)}\,y^s\,\mathrm ds\\
&+\frac1{2\pi i}\int\limits_{(-\sigma)}
\int\limits_0^\infty f(x)\,x^{s-1}\,\mathrm dx
\,\pi^{-n/2}\,n^{ns-n/2}\,\frac{\Gamma\!\left(\frac{1-ns}2\right)}{\Gamma\!\left(\frac{ns-(n-1)}2\right)}\,H(s)\,y^s\,\mathrm ds.
\end{align*}

Using $\Lambda$, we shall write the Taylor expansion of $H(\cdot)$ at infinity as
\[H(s)=\sum_{k=1}^K\frac{c_k'}{\left(s+\Lambda\right)^{k}}+O\bigl(\left|s+\Lambda\right|^{-K-1}\bigr).\]
Now, from each term of the sum $\sum_{k=1}^K$ we get an integral
\[\frac1{2\pi i}\int\limits_{(-\sigma)}\int\limits_0^\infty f(x)\,x^{s-1}\,\mathrm dx\,\pi^{-n/2}\,n^{ns-n/2}\,\frac{\Gamma\!\left(\frac{1-ns}2\right)}{\Gamma\!\left(\frac{ns-(n-1)}2\right)}\,\left(s+\Lambda\right)^{-k}\,y^s\,\mathrm ds.\]
We get a similar integral from the $O$-term and it will be handled by Lemma \ref{the-final-O-term} below.

We shall consider, for $\nu,k\in\left\{0,1,2,\ldots,\right\}$, the more general integral
\begin{multline*}
\Omega_{\nu,k}(y)
=\frac1{2\pi i}\int\limits_{(-\sigma_0)}\int\limits_0^\infty f(x)\,x^{s-1}\,\mathrm dx\,\pi^{-n/2}\,n^{ns-n/2}\\
\cdot\frac{\Gamma\!\left(\frac{1-ns}2\right)}{\Gamma\!\left(\frac{ns+1}2+\nu-\frac n2\right)}\cdot\left(s+\Lambda\right)^{-k}\cdot y^s\,\mathrm ds.
\end{multline*}
Ultimately, all the main terms will come from $J$-Bessel functions which occur when $k=0$, and the error term comes from the case of large $k+\nu$ and the error term in the asymptotics of the $J$-Bessel functions.

The rest of the argument runs as follows:
\begin{enumerate}\setlength{\itemsep}{0pt}
\item We first compute asymptotics for $\Omega_{\nu,0}$ via residues. This, in particular, gives the main term which comes from $\Omega_{0,0}$.
\item We then consider the case of large $\nu+k$ by a simple shift of the line of integration and estimates by absolute values. The result will be small. Since the estimates are by absolute values, the argument also works with $(s+\Lambda)^{-k}$ replaced by $O((s+\Lambda)^{-k})$.
\item Finally, we will write $\Omega_{\nu,k}(y)$ in terms of $\Omega_{\nu+1,k-1}(y)$ and $\Omega_{\nu+1,k}(y)$.
Iterating this and combining it with the previous two steps gives the desired result by induction on $k$.
\end{enumerate}
These steps are taken in Lemmas \ref{the-case-k-0}, \ref{the-final-O-term} and \ref{induction-on-k} below.
\end{proof}

\begin{lemma}\label{the-case-k-0}
Let $\nu\in\left\{0,1,2,\ldots\right\}$. Then
\begin{multline*}
\Omega_{\nu,0}(y)
=-2\,\pi^{-n/2}\,y^{1/2+(1-\nu)/n}\,n^{-\nu}\\
\cdot\int\limits_0^\infty f(x)\,x^{(1-\nu)/n-1/2}\,J_{\nu-n/2}\bigl(2\,n\,x^{1/n}\,y^{1/n}\bigr)\,\mathrm dx.
\end{multline*}
\end{lemma}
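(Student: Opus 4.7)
My plan is to recognize $\Omega_{\nu,0}(y)$, after two elementary changes of variable, as a Mellin--Barnes integral representation of a $J$-Bessel function. First I swap the order of the $x$- and $s$-integrations. This is legitimate since $\widetilde f(s)$, as the Mellin transform of a compactly supported smooth function, decays faster than any polynomial on vertical lines, while by Stirling's formula the gamma ratio grows at most polynomially. The factor $n^{ns}$ combines with $x^sy^s$ into $(n^n xy)^s$, leaving an inner $s$-integral that depends on $x$ and $y$ only through the combination $n^n xy$.

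Next I substitute $w=ns$, which contributes a factor $1/n$, moves the contour to $(-n\sigma_0)$, and simplifies the gamma factor to $\Gamma((1-w)/2)/\Gamma((w+1)/2+\nu-n/2)$. A subsequent purely real contour shift $w=w'+1+n/2-\nu$ then brings this ratio into the symmetric form $\Gamma((\mu-w')/2)/\Gamma((\mu+w')/2+1)$ with $\mu:=\nu-n/2$, and factorises the remaining algebraic piece as
\[
n^w(xy)^{w/n}=n^{n/2-\nu+1}(xy)^{1/2+(1-\nu)/n}\cdot\bigl(n(xy)^{1/n}\bigr)^{w'}.
\]
Setting $z=2n(xy)^{1/n}$ so that $n(xy)^{1/n}=z/2$, the remaining $w'$-integral is exactly a standard Mellin--Barnes representation of $2J_\mu(z)$, obtainable by Mellin inverting the classical formula
\[
\int_0^\infty J_\mu(t)\,t^{s-1}\,\mathrm dt=2^{s-1}\,\frac{\Gamma((s+\mu)/2)}{\Gamma((\mu-s)/2+1)}.
\]

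To invoke this identity I must shift the $w'$-contour from its initial far-left location into the strip of validity $-3/2<\Re w'<\Re\mu$ without crossing any poles. The only poles of the integrand, stemming from $\Gamma((\mu-w')/2)$, lie at $w'=\mu+2k$ for $k\in\mathbb Z_{\geq 0}$, all of which have $\Re w'\geq\Re\mu$; hence no pole is crossed during the shift. Substituting back and simplifying then produces the claimed identity. The main obstacle is the arithmetic bookkeeping of the powers of $n$ emerging from the $w=ns$ substitution and from the contour shift, of the powers of $x$ and $y$ separating out of $(xy)^{\cdot/n}$, and of the overall numerical constant and sign in front of the Bessel function; the Mellin--Barnes identification and the contour shift themselves are entirely routine.
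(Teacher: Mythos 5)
Your route---swap the $x$- and $s$-integrations, substitute $w=ns$, shift to centre the gamma quotient at $\mu=\nu-n/2$, and recognise a Mellin--Barnes representation of $J_\mu$---is different in form from the paper's, which keeps $\widetilde f(s)$ inside the integral, shifts the contour rightwards to $\sigma=2N/n$ with $N\to\infty$, and identifies the resulting sum of residues with the power series of $J_{\nu-n/2}$. The two are of course two sides of the same coin, since the Barnes integral for $J_\mu$ is itself proved by that residue computation. However, as written your plan has two genuine gaps.

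First, the interchange of integrals is not justified by the reason you give, and in fact cannot be carried out on the line $\Re s=-\sigma_0$. On that line the quotient $\Gamma(\frac{1-ns}{2})/\Gamma(\frac{ns+1}{2}+\nu-\frac n2)$ has modulus $\asymp\left\langle t\right\rangle^{n\sigma_0+n/2-\nu}$ (numerator evaluated at large positive real part, denominator at large negative real part), while $\int_0^\infty\left|f(x)\,x^{s-1}\right|\mathrm dx$ is a positive constant independent of $t$. Hence the double integral of absolute values diverges and Fubini does not apply; the superpolynomial decay of $\widetilde f(s)$ on vertical lines is a cancellation phenomenon, not an absolute-value one. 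Concretely, the inner $s$-integral you would be left with after the swap diverges, and the same polynomial growth also invalidates the subsequent purely vertical contour shifts of that inner integral. This is precisely why the paper either keeps $f$ inside while shifting to $+\infty$ (Lemma \ref{the-case-k-0}, where the factorial decay of $\Gamma(\frac{1-ns}{2})$ far to the right does the work), or first moves to a line where the integrand genuinely decays before invoking Fubini (Lemma \ref{the-final-O-term}).

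Second, the ``strip of validity'' $-3/2<\Re w'<\Re\mu$ of the classical Mellin pair for $J_\mu$ is empty whenever $\Re\mu=\nu-n/2\leqslant-3/2$, i.e.\ for $\nu\leqslant n/2-3/2$; in particular it is empty in the crucial case $\nu=0$, $n\geqslant3$, which produces the main term of the whole theorem. So the step ``shift the $w'$-contour into the strip and Mellin-invert the transform formula'' cannot be executed there. What remains valid for arbitrary order is the Barnes loop representation of $J_\mu$ with a contour kept to the left of the poles of $\Gamma(\frac{\mu-w'}{2})$ and closed to the right---but establishing that is exactly the residue-plus-power-series argument of the paper, so you would not actually be bypassing it. Finally, check your constant: carrying your bookkeeping through gives $+2\,\pi^{-n/2}\,y^{1/2+(1-\nu)/n}\,n^{-\nu}\int f\,x^{(1-\nu)/n-1/2}J_{\nu-n/2}$, whereas the lemma is stated with $-2$; remember that residues passed when a contour is closed or shifted to the right enter with a minus sign, and reconcile your sign with the one in the statement before relying on it downstream.
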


\begin{proof}
The integrand has poles at $1/n$, $3/n$, $5/n$, \dots, apart possibly those finitely many points where the $\Gamma$-factor in the denominator has a pole, but we will not worry about this as formally everything will work out just fine (cf. e.g. Section 5.3 in \cite{Lebedev}).

The residue at $s=(2j+1)/n$, where $j\in\left\{0,1,2,\ldots\right\}$ is
\[\int\limits_0^\infty f(x)\,x^{(2j+1)/n-1}\,\mathrm dx\,\pi^{-n/2}\,n^{2j+1-n/2}\cdot\left(-\frac2n\right)\cdot\frac{(-1)^j}{j!}\cdot\frac{y^{(2j+1)/n}}{\Gamma\!\left(j+1+\nu-\frac n2\right)}.\]
The series of the residues is
\begin{align*}
&\sum_{j=0}^\infty\int\limits_0^\infty f(x)\,x^{(2j+1)/n-1}\,\mathrm dx\,\pi^{-n/2}\,n^{2j+1-n/2}\cdot\left(-\frac2n\right)\frac{(-1)^j}{j!}\cdot\frac{y^{(2j+1)/n}}{\Gamma\!\left(j+1+\nu-n/2\right)}\\
&=-2\,\pi^{-n/2}\int\limits_0^\infty f(x)\sum_{j=0}^\infty\frac{(-1)^j}{j!}\cdot\frac{\bigl(2n\,x^{1/n}\,y^{1/n}\bigr)^{2j+\nu-n/2}}{\Gamma\!\left(j+1+\nu-\frac n2\right)}\\
&\qquad\cdot2^{n/2-\nu-2j}\cdot n^{-\nu}\,x^{2j/n+1/n-1-2j/n-\nu/n+1/2}
\,y^{2j/n+1/n-2j/n-\nu/n+1/2}\,\mathrm dx\\
&=-2\,\pi^{-n/2}\,y^{1/2+(1-\nu)/n}\,n^{-\nu}\int\limits_0^\infty f(x)\,x^{(1-\nu)/n-1/2}\,J_{\nu-n/2}\bigl(2n\,x^{1/n}\,y^{1/n}\bigr)\,\mathrm dx.
\end{align*}
Here the exchange of the summation and integration is allowed because the power series of the $J$-Bessel function has the usual pleasant properties (cf. Section 5.3 in \cite{Lebedev}).

Thus, the result follows by shifting the line of integration in $\Omega_{\nu,0}(y)$ to $\sigma=2N/n$ for a large positive integer $N$, and letting $N\longrightarrow\infty$. First, the shift of the line of integration by a finite amount is justified by observing that in a vertical strip, the Mellin transform of $f$ is rapidly decaying and by Stirling's formula the quotient of $\Gamma$-factors is polynomially bounded, and when moving to right, the $\Gamma$-factors give more and more decay.

Thus, it only remains to prove that the integral over the line $\sigma=2N/n$ tends to zero. The $\Gamma$-factor of the numerator can be estimated as follows: We repeatedly use the functional equation $s\,\Gamma(s)=\Gamma(s+1)$ to shift the argument of the $\Gamma$-function to the line $\Re s=1/2$, and then use the latter part of Theorem \ref{stirling}. This leads to
\[\Gamma\!\left(\frac12-N-\frac{int}2\right)\ll\frac{e^{-nt/2}}{(N-1)!}.\]
Similarly the denominator may be estimated by
\[\frac1{\Gamma\!\left(N+\frac12+\nu-\frac n2+\frac{int}2\right)}
\ll\frac{e^{nt/2}}{\left\langle t\right\rangle^C},\]
where $C$ is a positive real constant. The Mellin transform of $f$ may be estimated by
\[\ll_{L,f}\left\langle t\right\rangle^{-L}\,B^N,\]
where $L$ is an arbitrary fixed positive integer, and $B$ is a large positive real constant which depends on the support of $f$.
Combining these gives the result as the exponential factors cancel each other and the factorial $(N-1)!$ grows faster than anything that remains.
\end{proof}

We next prove that when $\nu+k$ is large enough, the integrals $\Omega_{\nu,k}(y)$ are rather small.
\begin{lemma}\label{the-final-O-term}
For $\nu+k\geqslant K+2$, we have
\[\Omega_{\nu,k}(y)\ll
y^{1/2+1/(2n)}\int\limits_0^\infty\left|f(x)\right|x^{1/(2n)-1/2}\,\mathrm dx\,
O\bigl((xy)^{-(K+1)/n}\bigr),\]
with the implicit constant depending on $K$, $\Lambda$ and $n$. Furthermore, the same result holds, if the factor $\left(s+\Lambda\right)^{-k}$ in the definition of $\Omega_{\nu,k}(y)$ is replaced by any function which is analytic and $O\bigl((s+\Lambda)^{-k}\bigr)$ in the closed vertical strip bounded by the lines $\sigma=K/n+1/(2n)-1/2$ and $\sigma=-\sigma_0$.
\end{lemma}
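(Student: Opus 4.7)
The plan is to shift the line of integration in $\Omega_{\nu,k}(y)$ from $\Re s=-\sigma_0$ to $\Re s=\sigma^\star:=\tfrac12+\tfrac1{2n}-\tfrac{K+1}n$, this abscissa being chosen precisely so that $y^{\sigma^\star}$ reproduces the claimed $y$-exponent and $x^{\sigma^\star-1}$ reproduces the claimed $x$-exponent inside the integral. The shift is legal because $\widetilde f$ is entire and the replacement for $(s+\Lambda)^{-k}$ is analytic by hypothesis in the relevant strip: for $K\ge(n-1)/2$ one has $\sigma^\star\le\sigma^{\star\star}:=K/n+1/(2n)-1/2$ and the shift stays inside the stipulated strip without meeting any pole of the $\Gamma$-quotient, while in the opposite range I would either shift only to $\sigma^{\star\star}$ (for the general analytic replacement, whereupon the bound is already stronger than the target since $x,y\gg1$) or, for the specific factor $(s+\Lambda)^{-k}$, shift to $\sigma^\star$ and absorb the finitely many residues at $\Gamma$-poles $(2j+1)/n<\sigma^\star$, each of which is dominated by the target in view of $y^{(2j+1)/n}\le y^{\sigma^\star}$ and the analogous inequality for $x$.

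On the shifted line I would estimate by absolute values. The technical heart is the $\Gamma$-quotient $\Gamma((1-ns)/2)/\Gamma((ns+1)/2+\nu-n/2)$: its two arguments have imaginary parts $\mp nt/2$, so Stirling's formula (Theorem~\ref{stirling}) produces exponential factors $e^{-\pi n|t|/4}$ from numerator and denominator, and these cancel identically in the ratio, leaving only the polynomial estimate
\[
\left|\frac{\Gamma\!\left(\tfrac{1-ns}{2}\right)}{\Gamma\!\left(\tfrac{ns+1}{2}+\nu-\tfrac n2\right)}\right|\ll\langle t\rangle^{-n\sigma-\nu+n/2}
\]
for $|t|\ge1$. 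The remaining ingredients are the trivial bound $|\widetilde f(s)|\le\int_0^\infty|f(x)|\,x^{\sigma-1}\,\mathrm dx$ (constant in $t$), the equality $|y^s|=y^\sigma$, and, by hypothesis or trivially for $(s+\Lambda)^{-k}$ itself, the bound $\ll\langle t\rangle^{-k}$ on the replacement factor.

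Assembling these estimates on $\Re s=\sigma^\star$, the $\langle t\rangle$-exponent of the integrand equals $-n\sigma^\star-\nu+n/2-k=K+\tfrac12-\nu-k$, which under the hypothesis $\nu+k\ge K+2$ is at most $-\tfrac32$. Thus the $t$-integral converges absolutely, and one collects
\[
|\Omega_{\nu,k}(y)|\ll y^{\sigma^\star}\int_0^\infty|f(x)|\,x^{\sigma^\star-1}\,\mathrm dx,
\]
which is precisely the claimed bound after substituting $\sigma^\star=1/2+1/(2n)-(K+1)/n$ and $\sigma^\star-1=1/(2n)-1/2-(K+1)/n$. The extension of the conclusion to any analytic $O((s+\Lambda)^{-k})$ replacement is automatic: the argument used only the pointwise bound $\ll\langle t\rangle^{-k}$ on that factor.

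The main obstacle I anticipate is not the estimation itself, which reduces to the clean cancellation of exponentials in the $\Gamma$-ratio and one application of Stirling, but rather the bookkeeping of the contour shift in the subcase $K<(n-1)/2$, where $\sigma^\star$ lies to the right of the guaranteed strip of analyticity and may cross poles of the $\Gamma$-quotient. The two-route resolution indicated above — either stopping at $\sigma^{\star\star}$ and exploiting $x,y\gg1$ for the generic replacement, or crossing the poles and dominating each residue by the target for the $(s+\Lambda)^{-k}$ case — is conceptually clear but slightly tedious to state cleanly.
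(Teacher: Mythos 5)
Your argument is essentially identical to the paper's: the paper also shifts to the line $\sigma=-X$ with $X=K/n+1/(2n)-1/2$ (the same abscissa as your $\sigma^\star$), applies Stirling's formula so that the exponential factors in the $\Gamma$-quotient cancel and the integrand is $\ll\left\langle t\right\rangle^{nX+n/2-\nu-k}$ with exponent $\leqslant-3/2$ by the hypothesis $\nu+k\geqslant K+2$, and then estimates by absolute values. Your extra bookkeeping for the subcase $K<(n-1)/2$ is a point the paper silently ignores (in the application $K$ is taken large, so no poles of $\Gamma\!\left(\frac{1-ns}2\right)$ are crossed), but it does not change the substance of the proof.
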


\begin{proof}
We shift the line of integration to $\sigma=-X$, where
\[X=K/n+1/(2n)-1/2.\]
By Stirling's formula, the integrand is in the $t$-aspect
\[\ll\frac{\left\langle t\right\rangle^{(1+nX)/2}}{\left\langle t\right\rangle^{(nX+1)/2+\nu-n/2}}\cdot\left\langle t\right\rangle^{-k}
=\left\langle t\right\rangle^{nX+n/2-\nu-k}.\]
Therefore, since $nX=K+1/2-n/2\leqslant-n/2+\nu+k-3/2$, we may use Fubini's theorem to obtain
\begin{multline*}
\Omega_{\nu,k}(y)
=\pi^{-n/2}\,n^{-n/2}
\int\limits_0^\infty f(x)
\frac1{2\pi i}\int\limits_{(-X)}
x^{s-1}\,n^{ns}
\\
\cdot\frac{\Gamma\!\left(\frac{1-ns}2\right)}{\Gamma\!\left(\frac{ns+1}2+\nu-\frac n2\right)}\left(s+\Lambda\right)^{-k}\,y^s\,
\mathrm ds\,
\mathrm dx.
\end{multline*}
Now the claim follows from estimation by absolute values as the inner integral is
\begin{multline*}
\ll x^{-X-1}\,n^{-nX}\,y^{-X}\int\limits_{(-X)}\left|\frac{\Gamma(\ldots)}{\Gamma(\ldots)}\right|\cdot\left|s+\Lambda\right|^{-k}\mathrm ds\ll_X
x^{-X-1}\,y^{-X}\\
=x^{1/(2n)-1/2-(K+1)/n}\,y^{1/2+1/(2n)-(K+1)/n}.
\end{multline*}
\end{proof}

\begin{lemma}\label{induction-on-k}
Let $\nu,k\in\left\{0,1,2,\ldots\right\}$, and let $K$ and $\Lambda$ be as before, and assume that $K\geqslant\nu+k$. Then
\[\Omega_{\nu,k}(y)=2\,y^{1/2+1/(2n)}\int\limits_0^\infty f(x)\,x^{1/(2n)-1/2}\,\mathcal K(x,y)\,\mathrm dx,\]
where the kernel $\mathcal K(x,y)$ has the asymptotics
\begin{multline*}
\mathcal K(x,y)
=\sum_{\ell=\nu+k}^K(xy)^{-\ell/n}
\left(c_\ell^+\,e\bigl(\pi^{-1}\,n\,x^{1/n}\,y^{1/n}\bigr)
+c_\ell^-\,e\bigl(\pi^{-1}\,n\,x^{1/n}\,y^{1/n}\bigr)\right)\\
+O\bigl((xy)^{-(K+1)/n}\bigr).
\end{multline*}
The constants $c_\ell^\pm$, the kernel $\mathcal K$, and the implicit constant in the $O$-term depend on $n$, $\nu$, $k$, $K$ and $\Lambda$.
\end{lemma}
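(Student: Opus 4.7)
The plan is to derive a recursion
\[
\Omega_{\nu,k}(y) = \tfrac n2\,\Omega_{\nu+1,k-1}(y) + C_\nu\,\Omega_{\nu+1,k}(y),
\qquad C_\nu := \tfrac12+\nu-\tfrac n2-\tfrac{n\Lambda}{2},
\]
and then iterate it, together with Lemmas \ref{the-case-k-0} and \ref{the-final-O-term}, by a double induction. The recursion is immediate from $\Gamma(w+1)=w\Gamma(w)$ applied with $w=\tfrac{ns+1}{2}+\nu-\tfrac n2=\tfrac n2(s+\Lambda)+C_\nu$, which inside the integrand defining $\Omega_{\nu,k}(y)$ gives
\[
\frac{(s+\Lambda)^{-k}}{\Gamma\bigl(\tfrac{ns+1}{2}+\nu-\tfrac n2\bigr)}
=\frac{\tfrac n2(s+\Lambda)^{-(k-1)}+C_\nu(s+\Lambda)^{-k}}{\Gamma\bigl(\tfrac{ns+1}{2}+(\nu+1)-\tfrac n2\bigr)}.
\]

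For the outer base case $k=0$, I feed into Lemma \ref{the-case-k-0} the standard Hankel asymptotic expansion of $J_{\nu-n/2}(z)$ at $z=2nx^{1/n}y^{1/n}$, convert the trigonometric factors into $e(\pm\pi^{-1}nx^{1/n}y^{1/n})$, and pull a factor $(xy)^{-\nu/n}$ out of the weight $y^{1/2+(1-\nu)/n}x^{(1-\nu)/n-1/2}$ to reveal the prescribed weight $y^{1/2+1/(2n)}x^{1/(2n)-1/2}$. This produces the asserted expansion with sum starting at $\ell=\nu=\nu+0$, the leading coefficients matching the $c_0^\pm$ of Theorem \ref{voronoi-asymptotics}. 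For the outer inductive step $k\geqslant 1$, I run an inner reverse induction on $\nu$, phrased for all $\nu\geqslant 0$ with the convention that the sum is empty when $\nu+k>K$, so that the claim reduces to an $O$-bound in that range. The inner base $\nu+k\geqslant K+2$ is Lemma \ref{the-final-O-term} verbatim. For the inner step $\nu+k\leqslant K+1$, the recursion reduces matters to $\Omega_{\nu+1,k-1}(y)$, handled by the outer hypothesis with expansion starting at $\ell=\nu+k$, and to $\Omega_{\nu+1,k}(y)$, handled by the inner hypothesis with expansion starting at $\ell=\nu+k+1$. Combining via the coefficients $\tfrac n2$ and $C_\nu$ yields the desired expansion for $\Omega_{\nu,k}(y)$, and the constants $c_\ell^\pm$ are read off and relabelled in the process.

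The main obstacle is the bookkeeping around the borderline value $\nu+k=K+1$, which fits neither the natural induction hypothesis ($\nu'+k'\leqslant K$) nor Lemma \ref{the-final-O-term} ($\nu'+k'\geqslant K+2$); the empty-sum convention together with the inner reverse induction on $\nu$ closes this gap cleanly. Apart from that, the remaining work — the Hankel expansion, the $\Gamma$-recursion, and the coefficient arithmetic — is entirely routine.
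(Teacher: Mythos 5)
Your proof is correct and follows essentially the same route as the paper: the identical $\Gamma$-recursion $\Omega_{\nu,k}=\tfrac n2\,\Omega_{\nu+1,k-1}+\bigl(\tfrac12+\nu-\tfrac n2-\tfrac{n\Lambda}2\bigr)\Omega_{\nu+1,k}$, the base case $k=0$ via Lemma \ref{the-case-k-0} and the Hankel expansion of $J_{\nu-n/2}$, and termination via Lemma \ref{the-final-O-term}; your inner reverse induction on $\nu$ is just the paper's explicit $N$-fold unrolling of the recursion in disguise. Your explicit treatment of the borderline value $\nu+k=K+1$ via the empty-sum convention is a welcome refinement of a point the paper passes over silently, but it does not change the argument's structure.
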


\begin{proof}
We shall perform induction on $k$. The case $k=0$ follows immediately from Lemma \ref{the-case-k-0} and the asymptotics for the $J$-Bessel function. Let us then assume that $k\in\mathbb Z_+$ and that the claim has already been proven for smaller values of $k$.

The key of the induction step lies in writing
\[\frac1{s+\Lambda}=\frac\alpha{\frac{ns+1}2+\nu-\frac n2}+\frac\beta{\left(\frac{ns+1}2+\nu-\frac n2\right)\left(s+\Lambda\right)}.\]
Since the right-hand side is
\[=\frac{\alpha s+\alpha\Lambda+\beta}{\left(\frac{ns+1}2+\nu-\frac n2\right)\left(s+\Lambda\right)},\]
this is achieved by choosing
\[\alpha=\frac n2\quad\text{and}\quad
\beta=\frac12+\nu-\frac n2-\frac{n\Lambda}2.\]
Thus, using the fact that $s\,\Gamma(s)=\Gamma(s+1)$, we obtain the recursion formula
\[\Omega_{\nu,k}(y)
=\alpha\,\Omega_{\nu+1,k-1}(y)+\beta\,\Omega_{\nu+1,k}(y).\]
Applying this formula to the last term and repeating, say, $N\in\mathbb Z_+$ times gives
\begin{multline*}
\Omega_{\nu,k}(y)
=\alpha_1\,\Omega_{\nu+1,k-1}(y)
+\alpha_2\,\Omega_{\nu+2,k-1}(y)
+\alpha_3\,\Omega_{\nu+3,k-1}(y)\\
+\ldots
+\alpha_N\,\Omega_{\nu+N,k-1}(y)
+\beta_N\,\Omega_{\nu+N,k}(y).
\end{multline*}
Here the last term is covered by Lemma \ref{the-final-O-term}, provided that $\nu+k+N\geqslant K+2$, and all the other terms are covered by the induction assumption.
\end{proof}

\section{Proofs of the resonance results}

Let us first prove Theorem \ref{resonance}.
 
\begin{proof}[Proof of Theorem \ref{resonance}]
\noindent Write $f(x)$ for $w(x)\,e(d^{1/n}\,x/M^{1-1/n})$, and let $\Omega(\cdot)$ be as in Section \ref{asymptotics-section}. Theorem \ref{gln-voronoi} says that
\begin{align}\label{sum1}
\sum_{M\leqslant m\leqslant M+\Delta}A(m,1,...,1)\,w(m)\,e\!\left(\frac{d^{1/n}\,m}{M^{1-1/n}}\right)=\sum_{m=1}^\infty\frac{A(1,...,1,m)}{m}\,\Omega(\pi^n\,m).
\end{align}
The integral $\Omega(\pi^n\,m)$ will be handled using the asymptotics given in Theorem \ref{voronoi-asymptotics}.
Let us first treat the error term arising from the kernel $\mathcal K$. For a sufficiently large $K\in\mathbb Z_+$, the contribution of the error term to the sum \eqref{sum1} equals
\begin{multline*}
\ll\sum_{m=1}^\infty\frac{\left|A(1,\ldots,1,m)\right|}{m^{1/2-1/(2n)}}\int\limits_M^{M+\Delta}\left|w(x)\right|x^{1/(2n)-1/2}\,(xm)^{-(K+1)/n}\,\mathrm dx\\
\ll_K\Delta\,M^{1/(2n)-1/2-(K+1)/n}.
\end{multline*} 
For $K$ fixed and sufficiently large depending on $n$, this is $\ll1$.
Now we move to the main term coming from $\mathcal K$. We are left with estimating integrals
\begin{align*}
\int\limits_M^{M+\Delta}g_m^\pm(x)\,w(x)\,e\!\left(\frac{d^{1/n}\,x}{M^{1-1/n}}\pm n\,x^{1/n}\,m^{1/n}\right)x^{1/(2n)-1/2}\,m^{1/2+1/(2n)}\,\mathrm dx,
\end{align*}
where
\begin{align*}
g_m^\pm(x)=\sum_{\ell=0}^Kc_\ell^\pm\,(xm)^{-\ell/n}.
\end{align*}

The derivative of the phase function, for $d\neq m$ in the case of the minus-sign and for all $m$ otherwise, is 
\begin{align*}
\frac{d^{1/n}}{M^{1-1/n}}\pm x^{1/n-1}m^{1/n}\asymp\frac{m^{1/n}}{M^{1-1/n}}.
\end{align*}
Therefore, using Lemma \ref{jutila-motohashi-lemma} we get for any $P\in\mathbb Z_+$ that
\begin{multline*}
\int\limits_M^{M+\Delta}g_m^\pm(x)\,w(x)\,e\!\left(\frac{d^{1/n}x}{M^{1-1/n}}\pm n\,x^{1/n}\,m^{1/n}\right)x^{1/(2n)-1/2}\,m^{1/2+1/(2n)}\,\mathrm dx\\
\ll M^{1/(2n)-1/2}\,m^{1/(2n)+1/2}\left(\Delta\cdot\frac{m^{1/n}}{M^{1-1/n}}\right)^{-P}\left(1+\frac{2\Delta}M\right)^P\cdot\Delta\\
\ll_P\Delta^{-P+1}\,m^{1/(2n)+1/2-P/n}\,M^{1/(2n)-1/2+((n-1)P)/n},
\end{multline*}
for these values of $m$.
The series of these integrals contribute, for a sufficiently large fixed $P$,
\[\ll_P\sum_{m=1}^\infty\frac{\left|A(1,\ldots,1,m)\right|}m
\cdot\Delta^{-P+1}\,m^{1/2n+1/2-P/n}\,M^{1/2n-1/2+(n-1)P/n}\ll_P1.\]

Substituting this back to the sum ($\ref{sum1}$) yields
\begin{align*}
&\sum_{M\leqslant m\leqslant M+\Delta}A(m,1,...,1)\,w(m)\,e\!\left(\frac{d^{1/n}\,m}{M^{1-1/n}}\right)=\frac{A(1,...,1,d)}{d^{1/2-1/(2n)}}\\
&\qquad\cdot\int\limits_M^{M+\Delta}g_d^-(x)\,e\!\left(\frac{d^{1/n}x}{M^{1-1/n}}-n\,x^{1/n}\,d^{1/n}\right)\,w(x)\,x^{1/(2n)-1/2}\,\mathrm dx
+O(1).
\end{align*}
The term with $\ell=0$ in the sum $g_d^{-}(x)$ will give the main term. Next we consider terms with $\ell\geqslant1$.
Integrating using absolute values gives
\begin{multline*}
\int\limits_M^{M+\Delta}c_\ell^-\,w(x)\,e\!\left(\frac{d^{1/n}\,x}{M^{1-1/n}}-n\,x^{1/n}\,d^{1/n}\right)x^{-\ell/n}\,x^{1/(2n)-1/2}\,\mathrm dx\\
\ll\Delta\,M^{-\ell/n+1/(2n)-1/2}
\ll\Delta\,M^{-1/2-1/(2n)}.
\end{multline*}

Hence
\begin{align*}
&\sum_{M\leqslant m\leqslant M+\Delta}A(m,1,...,1)\,w(m)\,e\!\left(\frac{d^{1/n}\,m}{M^{1-1/n}}\right)\\
&=\frac{A(1,...,1,d)}{d^{1/2-1/(2n)}}\,c_0^-\int\limits_M^{M+\Delta}w(x)\,e\!\left(\frac{d^{1/n}\,x}{M^{1-1/n}}-n\,x^{1/n}\,d^{1/n}\right)x^{1/(2n)-1/2}\,\mathrm dx\\
&\qquad+O(\Delta\,M^{-1/2-1/(2n)})+O(1)\\
&=\frac{A(1,...,1,d)}{d^{1/2-1/(2n)}\,\sqrt n}\,e\!\left(\frac{n+3}8\right)\\
&\qquad\cdot\int\limits_M^{M+\Delta}w(x)\,e\!\left(\frac{d^{1/n}\,x}{M^{1-1/n}}-n\,x^{1/n}\,d^{1/n}\right)x^{1/(2n)-1/2}\,\mathrm dx\\
&\qquad+O(\Delta\,M^{-1/2-1/(2n)}),
\end{align*}
which is what we wanted.
\end{proof}

\begin{proof}[Proof of Theorem \ref{nonlinear-resonance}]
This is similar to the previous proof. 
We first use the Voronoi type summation formula. The terms with $d\ne m$ are treated similarly as before, and in the end the term corresponding to the parameters $m=d$ and $\ell=0$ again yields the main term. Indeed, the only essential difference to the previous proof is that, in the current setting, the exponential factor completely disappears from the resonating integral.
\end{proof}

\section{$\Omega$-results}

From Corollary \ref{weighted-corollary}, we also get large values for unweighted short sums as follows.
\begin{corollary}\label{weightless-omega}
Let $d\in\mathbb Z_+$ be fixed and such that $A(1,\ldots,1,d)\neq0$. Then there exists $\Delta\asymp M^{1-1/(2n)}$ such that
\[
\sum_{M\leqslant m\leqslant M+\Delta}A(m,1,1,\dots ,1)\,e\!\left(\frac{d^{1/n}}{M^{1-1/n}}m\right)\gg M^{1/2}.
\]
\end{corollary}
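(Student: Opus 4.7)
The plan is to deduce this unweighted $\Omega$-result from Corollary~\ref{weighted-corollary} via Riemann--Stieltjes partial summation. Fix a small constant $C>0$ (to be chosen), set $\Delta_0 = C\,M^{1-1/(2n)}$, and construct a smooth weight $w\in C_{\mathrm c}^\infty(\mathbb R_+)$ supported in $\bigl[M+\tfrac{\Delta_0}{4},\,M+\tfrac{3\Delta_0}{4}\bigr]$, identically equal to $1$ on $\bigl[M+\tfrac{2\Delta_0}{5},\,M+\tfrac{3\Delta_0}{5}\bigr]$, with monotone transitions and derivative bounds $w^{(\nu)}(x)\ll_{\nu}\Delta_0^{-\nu}$. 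Since $\mathrm{supp}(w)\subset[M,M+\Delta_0]$, the hypotheses of Corollary~\ref{weighted-corollary} (with $\Delta=\Delta_0$) are satisfied.

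First I would verify that for $C$ chosen sufficiently small, the weighted sum
\[W:=\sum_{m} A(m,1,\ldots,1)\,e\!\left(\frac{d^{1/n}\,m}{M^{1-1/n}}\right)w(m)\]
satisfies $|W|\gg M^{1/2}$. The phase $\varphi(x)=d^{1/n}\,x\,M^{-1+1/n}-n\,d^{1/n}\,x^{1/n}$ appearing in the main term of Theorem~\ref{resonance} satisfies $\varphi'(M)=0$ and $\varphi''(M)\asymp_{n,d}M^{-2+1/n}$, so a Taylor expansion gives $|\varphi(x)-\varphi(M)|\ll_{n,d}C^2$ uniformly on $\mathrm{supp}(w)$. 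For $C$ small enough this oscillation does not cause cancellation, and the main integral has modulus $\asymp M^{1/(2n)-1/2}\cdot\|w\|_1\asymp M^{1/2}$, while the error term $O(\Delta_0\,M^{-1/2-1/(2n)})=O(M^{1/2-1/n})$ is negligible.

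Then I would apply Riemann--Stieltjes integration by parts. Writing $g(u)=\sum_{M\leqslant m\leqslant u}A(m,1,\ldots,1)\,e(d^{1/n}m/M^{1-1/n})$, and using that $w$ vanishes at the endpoints,
\[W=\int w(u)\,dg(u)=-\int_M^{M+\Delta_0} g(u)\,w'(u)\,du,\quad\text{hence}\quad|W|\leqslant\sup_{u\in\mathrm{supp}(w')}|g(u)|\cdot\int|w'(u)|\,du.\]
The total variation $\int|w'(u)|\,du=O(1)$ since $\|w\|_\infty\leqslant1$ and $w$ has two monotone transitions. Combined with $|W|\gg M^{1/2}$, this forces some $u^*\in\mathrm{supp}(w')\subset\bigl[M+\tfrac{\Delta_0}{4},\,M+\tfrac{3\Delta_0}{4}\bigr]$ with $|g(u^*)|\gg M^{1/2}$. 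Setting $\Delta=u^*-M$ gives $\Delta\asymp M^{1-1/(2n)}$ and the required conclusion $\sum_{M\leqslant m\leqslant M+\Delta}A(m,1,\ldots,1)\,e(d^{1/n}m/M^{1-1/n})=g(u^*)\gg M^{1/2}$.

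The main obstacle will be the lower-bound verification for $|W|$: Corollary~\ref{weighted-corollary} is stated as an $\asymp$-bound, but the implicit constants depend on $w$, so one must trace through Theorem~\ref{resonance} and confirm that the main-term integral does not cancel. This forces $C$ to be small enough that the phase $\varphi$ acts essentially as a constant on $\mathrm{supp}(w)$, which is exactly why $\Delta_0$ is taken to be a small multiple of $M^{1-1/(2n)}$ rather than the full order allowed in the corollary. A secondary subtlety is keeping the transitions of $w$ well away from both $M$ and $M+\Delta_0$, so that partial summation necessarily locates $u^*$ with $u^*-M$ of order $M^{1-1/(2n)}$ rather than merely $o(M^{1-1/(2n)})$.
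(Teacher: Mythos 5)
Your argument is correct and is essentially the paper's own proof run in the contrapositive-free direction: the paper assumes every unweighted partial sum over $[M,M+\Delta]$ with $\Delta\asymp M^{1-1/(2n)}$ is $o(M^{1/2})$ and contradicts Corollary~\ref{weighted-corollary} by partial summation against a weight of bounded variation whose transition regions sit at a positive proportion of the interval, which is exactly your mechanism. Your explicit re-derivation of the lower bound $|W|\gg M^{1/2}$ from Theorem~\ref{resonance} with a small constant $C$, rather than citing Corollary~\ref{weighted-corollary} as a black box, is welcome extra care about the $\ll_{n,d}$ constraint there, but it does not change the approach.
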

\begin{proof}
Assume
\[\sum_{M\leqslant m\leqslant M+\Delta}A(m,1,1,\dots ,1)\,e\!\left(\frac{d^{1/n}}{M^{(n-1)/n}}m\right)=o(M^{1/2})\]
for all $\Delta\asymp M^{1-1/(2n)}$ and that $w(x)=1$ when $x\in [M_1,M_2]$. Further assume $M_1-M\asymp M_2-M_1\asymp M+\Delta -M_2$. Now
\begin{align*}
&\sum_{M\leqslant m\leqslant M+\Delta}A(m,1,1,\dots ,1)\,e\!\left(\frac{d^{1/n}}{M^{1-1/n}}m\right)w(m)\\
&=\left(\sum_{M\leqslant m\leqslant M_1}+\sum_{M_2\leqslant m\leqslant M+\Delta}\right)A(m,1,1,\dots ,1)\,e\!\left(\frac{d^{1/n}}{M^{1-1/n}}m\right)w(m)\\
&\qquad+\sum_{M_1\leqslant m\leqslant M_2}A(m,1,1,\dots ,1)\,e\!\left(\frac{d^{1/n}}{M^{1-1/n}}m\right)\\
&=\left(\sum_{M\leqslant m\leqslant M_1}+\sum_{M_2\leqslant m\leqslant M+\Delta}\right)A(m,1,1,\dots ,1)\,e\!\left(\frac{d^{1/n}}{M^{1-1/n}}m\right)w(m)\\
&\qquad+o(M^{1/2}).
\end{align*}
We may now use partial summation to bound the sums. We obtain
\begin{align*}
&\left(\sum_{M\leqslant m\leqslant M_1}+\sum_{M_2\leqslant m\leqslant M+\Delta}\right)A(m,1,1,\dots ,1)\,e\!\left(\frac{d^{1/n}}{M^{1-1/n}}m\right)w(m)\\
&=w(M_1)\sum_{M\leqslant m\leqslant M_1}A(m,1,1,\dots ,1)\,e\!\left(\frac{d^{1/n}}{M^{1-1/n}}m\right)\\
&\qquad+w(M_2)\sum_{M_2\leqslant M+\Delta}A(m,1,1,\dots ,1)\,e\!\left(\frac{d^{1/n}}{M^{1-1/n}}m\right)\\
&\qquad-\int\limits_{M}^{M_1}\left(\sum_{M\leqslant m\leqslant t}A(m,1,1,\dots ,1)\,e\!\left(\frac{d^{1/n}}{M^{1-1/n}}m\right) \right)w'(t)\,\mathrm dt\\
&\qquad-\int\limits_{M_2}^{M+\Delta}\left(\sum_{M\leqslant m\leqslant t}A(m,1,1,\dots ,1)\,e\!\left(\frac{d^{1/n}}{M^{1-1/n}}m\right) \right)w'(t)\,\mathrm dt\\
&=o(M^{1/2})+o\left(\int\limits_M^{M_1}M^{1/2}\,\Delta^{-1}\,dt\right)+o\left(\int\limits_{M_2}^{M+\Delta}M^{1/2}\,\Delta^{-1}\,dt\right)=o(M^{1/2}),
\end{align*}
which leads to a contradiction with Corollary \ref{weighted-corollary}. This proves the corollary.
\end{proof}

Since a sum can be estimated by splitting it into shorter sums and then adding up upper bounds for the subsums, we get the $\Omega$-result also for shorter sums:
\begin{corollary}\label{shorter-weightless-omega} Let $d\in\mathbb Z_+$ be fixed and such that $A(1,\ldots,1,d)\neq0$. Then, for any $\gamma\in\left[1/2-1/(2n),1-1/(2n)\right]$, there exists $\Delta\asymp M^\gamma$ such that
\[
\sum_{M\leqslant m\leqslant M+\Delta}A(m,1,1,\dots ,1)\,e\!\left(\frac{d^{1/n}}{M^{1-1/n}}\,m\right)\gg\Delta\,M^{1/(2n)-1/2}.
\]
\end{corollary}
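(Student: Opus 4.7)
The plan is to deduce this directly from Corollary \ref{weightless-omega} via the partitioning argument hinted in the paper: from a long sum known to be of size $\gg M^{1/2}$, a sufficiently short subsum must witness the desired lower bound. Corollary \ref{weightless-omega} supplies, along an infinite sequence of values of $M$, a length $\Delta_0\asymp M^{1-1/(2n)}$ such that
\[
\biggl|\sum_{M\leqslant m\leqslant M+\Delta_0}A(m,1,\ldots,1)\,e\!\left(\frac{d^{1/n}\,m}{M^{1-1/n}}\right)\biggr|\gg M^{1/2}.
\]

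Fix $\gamma\in[1/2-1/(2n),\,1-1/(2n)]$. I would partition $[M,\,M+\Delta_0]$ into $N$ consecutive subintervals of essentially equal length $\Delta\asymp M^\gamma$, so that $N\asymp M^{1-1/(2n)-\gamma}$; the upper constraint on $\gamma$ ensures $N\geqslant 1$. The long sum decomposes as a sum of the corresponding $N$ subsums, and the triangle inequality forces at least one of them to have modulus $\gg M^{1/2}/N\asymp M^{\gamma+1/(2n)-1/2}\asymp \Delta\,M^{1/(2n)-1/2}$, which is exactly the bound claimed by the corollary. The lower constraint $\gamma\geqslant 1/2-1/(2n)$ guarantees that this target is at least of order $1$, so the $\Omega$-assertion is meaningful.

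A small point of book-keeping: the subsum singled out by the pigeonhole starts at a shifted point $M':=M+X_{j-1}\asymp M$ rather than at $M$ itself, and retains the phase normalisation $e(d^{1/n}m/M^{1-1/n})$ inherited from the long sum. Since $\Omega$-results are statements about existence along an infinite family of parameters with $\Delta\asymp (M')^\gamma$, the shifted pair $(M',\Delta)$ provides the required witness, with $M'$ playing the role of the ``$M$'' in the statement. The discrepancy between $e(d^{1/n}m/M^{1-1/n})$ and $e(d^{1/n}m/(M')^{1-1/n})$ amounts to multiplication by $e(\beta(m-M'))$ with $\beta\Delta=O(1)$ (which follows from $|M'-M|\leqslant\Delta_0$ and $\gamma\leqslant 1-1/(2n)$), a factor of bounded total variation that, via partial summation, preserves the modulus of the subsum up to absolute constants.

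The main step is the elementary triangle-inequality pigeonhole; I do not expect any substantive analytic obstacle beyond the minor book-keeping above.
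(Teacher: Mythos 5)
Your argument is essentially identical to the paper's: the corollary is deduced from Corollary \ref{weightless-omega} by splitting the long interval of length $\asymp M^{1-1/(2n)}$ into $\asymp M^{1-1/(2n)-\gamma}$ consecutive pieces of length $\asymp M^\gamma$ and applying the triangle inequality, which is precisely the one-line splitting proof the paper gives. The only caveat concerns your phase-renormalization remark: partial summation bounds the renormalized sum by the \emph{maximum of the partial sums} of the original one, not by its full modulus, so it does not literally give the asserted equivalence of moduli; this is harmless here because the pigeonhole directly produces a subsum carrying the phase $e(d^{1/n}m/M^{1-1/n})$ normalized by the original $M$ (which is the natural reading of the statement, the paper itself being silent on this book-keeping).
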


Let us state a lemma about the size of exponential sums before proving Theorem \ref{omega}.
\begin{lemma}
We have
\[
\sum_{0\leqslant h\leqslant\Delta-1}e\!\left(\frac{d^{1/n}}{M^{1-1/n}}\,h\right)\asymp\Delta
\]
when $\Delta=o\left(M^{(n-1)/n}\right)$ and $d\in\mathbb Z_+$ is fixed.
\end{lemma}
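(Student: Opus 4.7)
The plan is to recognize the sum as a geometric series and then exploit the fact that, under the hypothesis $\Delta=o(M^{1-1/n})$, the common ratio is so close to $1$ that all summands point in nearly the same direction.

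Write $\alpha=d^{1/n}\,M^{1/n-1}$ for brevity. Since $d$ is fixed and $\Delta=o(M^{1-1/n})$, we have
\[
\alpha\,\Delta=\frac{d^{1/n}\,\Delta}{M^{1-1/n}}=o(1)
\]
as $M\longrightarrow\infty$, and hence in particular $\alpha=o(\Delta^{-1})=o(1)$ as well. So for all sufficiently large $M$ we may assume $0<\alpha\leqslant\alpha\Delta\leqslant 1/2$.

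Next, apply the geometric sum formula
\[
\sum_{0\leqslant h\leqslant\Delta-1}e(\alpha h)=\frac{e(\alpha\Delta)-1}{e(\alpha)-1},
\]
take absolute values, and use $\left|e(\theta)-1\right|=2\left|\sin(\pi\theta)\right|$ to obtain
\[
\left|\sum_{0\leqslant h\leqslant\Delta-1}e(\alpha h)\right|=\frac{\left|\sin(\pi\alpha\Delta)\right|}{\left|\sin(\pi\alpha)\right|}.
\]
On the interval $\left[0,1/2\right]$ the elementary bounds $2x\leqslant\sin(\pi x)\leqslant\pi x$ give $\sin(\pi x)\asymp x$, and the previous paragraph ensures that both $\alpha$ and $\alpha\Delta$ lie in this range for $M$ large. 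Therefore
\[
\left|\sum_{0\leqslant h\leqslant\Delta-1}e(\alpha h)\right|\asymp\frac{\alpha\Delta}{\alpha}=\Delta,
\]
which is the claim.

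There is no real obstacle here; the only point to watch is the quantitative statement $\alpha\Delta=o(1)$, which is exactly why the hypothesis $\Delta=o(M^{1-1/n})$ is imposed. Without it, $\sin(\pi\alpha\Delta)$ could be as small as one pleases relative to $\sin(\pi\alpha)$, and the sum could drop below linear size. One could equivalently argue from the real part: each summand has real part $\cos(2\pi\alpha h)\geqslant\cos(2\pi\alpha\Delta)=1-o(1)$, so $\Re S\geqslant(1-o(1))\Delta$, which gives the lower bound directly, while the upper bound $|S|\leqslant\Delta$ is trivial.
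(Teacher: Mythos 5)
Your proof is correct and follows essentially the same route as the paper: apply the geometric sum formula and then use that both numerator and denominator are of the form $e(\theta)-1\asymp\theta$ for $\theta=o(1)$, which is exactly where the hypothesis $\Delta=o(M^{(n-1)/n})$ enters. Your version just makes the small-angle step explicit via $\left|e(\theta)-1\right|=2\left|\sin(\pi\theta)\right|$ and $\sin(\pi x)\asymp x$ on $\left[0,1/2\right]$, which the paper leaves implicit.
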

\begin{proof}
Since $\sum_{0\leqslant h\leqslant\Delta-1}e\!\left(\frac{d^{1/n}}{M^{1-1/n}}\,h\right)$ is a geometric sum, we have
\[
\sum_{0\leqslant h\leqslant\Delta-1}e\!\left(\frac{d^{1/n}}{M^{1-1/n}}\,h\right)=\frac{e\!\left(\frac{d^{1/n}}{M^{(n-1)/n}}\,\Delta\right)-1}{e\!\left(\frac{d^{1/n}}{M^{(n-1)/n}}\right)-1}\asymp \frac{d^{1/n}\,M^{-(n-1)/n}\,\Delta}{d^{1/n}\,M^{-(n-1)/n}}=\Delta.
\]
\end{proof}

We are now ready to move to the proof of Theorem \ref{omega}.

\begin{proof}[Proof of Theorem \ref{omega}]
Let $U \asymp M^{1-1/(2n)}$ be such that
\[
\sum_{M\leqslant m\leqslant M+U }A(m,1,1,\dots ,1)\,e\!\left(\frac{d^{1/n}}{M^{(n-1)/n}}m\right)\gg M^{1/2}.
\]
Now
\begin{multline*}
\left(\sum_{0\leqslant h\leqslant \Delta-1}e\!\left(\frac{d^{1/n}}{M^{1-1/n}}h\right)\right)\left(\sum_{M\leqslant m\leqslant M+U }A(m,1,1,\dots ,1)\,e\!\left(\frac{d^{1/n}}{M^{(n-1)/n}}m\right)\right)\\
\gg M^{1/2}\,\Delta.
\end{multline*}
On the other hand, we have
\begin{align*}
&\left(\sum_{0\leqslant h\leqslant \Delta-1}e\!\left(\frac{d^{1/n}}{M^{1-1/n}}h\right)\right)\left(\sum_{M\leqslant m\leqslant M+U }A(m,1,1,\dots ,1)\,e\!\left(\frac{d^{1/n}}{M^{(n-1)/n}}m\right)\right)\\
&=\sum_{\substack{0\leqslant h\leqslant \Delta-1\\M\leqslant m\leqslant M+U }}A(m,1,1,\dots ,1)\,e\!\left(\frac{d^{1/n}}{M^{1-1/n}}(m+h)\right)\\
&=e\!\left(\frac{d^{1/n}}{M^{1-1/n}}\right)A(m,1,\dots ,1)\\
&\qquad+e\!\left(\frac{d^{1/n}}{M^{1-1/n}}(M+1)\right)\left(A(m,1,\dots,1)+A(m+1,1,\dots,1)\right)+\ldots \\
&\qquad+e\!\left(\frac{d^{1/n}}{M^{1-1/n}}(M+\Delta-1)\right)\sum_{M\leqslant m\leqslant M+\Delta-1}A(m,1,\dots,1)\\
&\qquad\qquad\ldots\\
&\qquad+e\!\left(\frac{d^{1/n}}{M^{1-1/n}}(M+U )\right)\sum_{-\Delta+1+M+U \leqslant m\leqslant M+U }A(m,1,\dots ,1)\\
&\qquad+\ldots+e\!\left(\frac{d^{1/n}}{M^{1-1/n}}(M+U +\Delta-1)\right)A(M+U +\Delta-1,1,\dots,1).
\end{align*}
The sums that are of length $<\Delta$ are at most of size $\Delta^{1+\varepsilon+\vartheta}$.
Hence, their total contribution is $\ll\Delta^{2+\varepsilon+\vartheta}$,
so that when $\Delta^{2+\varepsilon+\vartheta}=o\left(M^{1/2}\,\Delta\right)$, we have
\begin{multline*}
e\!\left(\frac{d^{1/n}}{M^{1-1/n}}(M+\Delta-1)\right)\sum_{M\leqslant m\leqslant M+\Delta-1}A(m,1,\dots,1)+\ldots \\
+e\!\left(\frac{d^{1/n}}{M^{1-1/n}}(M+U )\right)\sum_{-\Delta+1+M+U \leqslant m\leqslant M+U }A(m,1,\dots ,1)\gg M^{1/2}\,\Delta.
\end{multline*}
Since the number of sums is $U $, there must be a sum of size
\[\gg M^{1/2}\,\Delta\,U ^{-1}
\asymp M^{1/2}\,\Delta\,M^{1/(2n)-1}
=\Delta\,M^{1/(2n)-1/2}.\]
\end{proof}

\section*{Funding}

This work was supported by the Academy of Finland [grant no. 138337 to A.-M. E.-H., grant no. 138522 to J. J., Finnish Centre of Excellence in Inverse Problems Research to E. V.], Finland's Ministry of Education [Doctoral Program in Inverse Problems to E. V.], and the Vilho, Yrj\"o and Kalle V\"ais\"al\"a Foundation [to E. V.].

\end{document}